\numberwithin{equation}{section}
\newtheorem{theorem}{Theorem}[section]
\newtheorem{proposition}[theorem]{Proposition}
\newtheorem{lemma}[theorem]{Lemma}
\newtheorem{conjecture}[theorem]{Conjecture}
\theoremstyle{definition}
\newtheorem{definition}[theorem]{Definition}
\newtheorem{example}[theorem]{Example}
\theoremstyle{remark}
\newtheorem{remark}[theorem]{Remark}
\DeclareMathOperator{\im}{im}
\newcommand{\St}{{S}}
\newcommand{\invtPoly}{\mathcal{P}}
\newcommand{\bd}{box diagram}
\newcommand{\R}{\mathbb{R}}
\newcommand{\Rnn}{\mathbb{R}_{\geq 0}}
\def\lra{\leftrightarrows}
\def\la{\leftarrow}
\def\been{\begin{enumerate}}
\def\enen{\end{enumerate}}
\def\SS{\mathcal S}
\def\CC{\mathcal C}
\def\RR{\mathcal R}
\newcommand{\lradot}{%
  \mathrel{\ooalign{\hfil$\vcenter{
   \hbox{$\mkern3mu\scriptscriptstyle\bullet$}}$\hfil\cr$\longleftrightarrow$\cr}
  }%
}
\definecolor{DarkGreen}{rgb}{0,0.65,0}
\definecolor{NiceBlue}{rgb}{0.2,0.2,0.75}
\newcommand{\struc}[1]{{\color{NiceBlue} #1}}
\newcommand{\C}{\mathbb{C}}
\newcommand{\N}{\mathbb{N}}
\newcommand{\lam}{\lambda}
\newcommand{\epsi}{\varepsilon}
\newcommand\cV{{\ensuremath{\mathcal{V}}}}
\DeclareMathOperator{\dist}{dist}
\DeclareMathOperator{\Sym}{Sym}
\begin{document}

\title[Nondegenerate multistationarity in small reaction networks]{Nondegenerate multistationarity in \\ small reaction networks}

\date{1 June 2018}

\author{Anne Shiu}
\address{Anne Shiu, Texas A\&M University, Department of Mathematics, Mailstop 3368, College Station TX 77843-3368 USA \medskip}

\email{annejls@math.tamu.edu}

\author{Timo de Wolff}

\address{Timo de Wolff, Technische Universit\"at Berlin, Institut f\"ur Mathematik, Stra{\ss}e des 17.~Juni 136, 10623 Berlin,
 Germany\medskip}

\email{dewolff@math.tu-berlin.de}

\begin{abstract}
Much attention has been focused in recent years 
on the following algebraic problem 
arising from applications: which chemical reaction networks, when taken with mass-action kinetics,
admit multiple positive steady states?
The interest behind this question is in steady states that are stable.
As a step toward this difficult question,
here we address the question of multiple {\em nondegenerate} positive steady states.
Mathematically, this asks whether certain families of parametrized, real, sparse polynomial systems
ever admit multiple positive real roots that are simple.  
Our main results settle this problem for certain types of small networks, and our
techniques point the way forward for larger networks.

\vskip .1in
\noindent {\bf Keywords:} chemical reaction network, discriminant, mass-action kinetics, multiple steady states, multiplicity of roots 

\vskip .1in
\noindent {\bf Mathematics Subject Classification:} 
37C10, 
37C25, 
12D10, 
14P05, 
34A34, 
65H04, 
80A30 
\end{abstract}


\maketitle

\section{Introduction}

This work is motivated by the \textit{Nondegeneracy Conjecture} from the study of reaction systems~\cite{Joshi:Shiu:Multistationary}:
if a reaction network admits multiple positive steady states, does it also admit multiple {\em nondegenerate} positive steady states?  
Equivalently, for certain families of parametrized sparse-polynomial systems, if one member of the family admits multiple positive roots, does some member admit multiple {\em multiplicity-one} positive roots?
In fact, there has been a great deal of work on characterizing when a network is multistationary (surveyed in~\cite{mss-review}),
but much less on nondegenerate multistationarity or the stronger condition of {\em bistability}~\cite{perspective}.
If the Nondegeneracy Conjecture is true, then the concepts 
of multistationarity and nondegenerate multistationarity 
are essentially equivalent.  
These questions are important in applications, because bistable networks are thought to underlie biochemical switches and other memory-encoding behavior~\cite{HowSwitch}.

Our main results 
verify the Nondegeneracy Conjecture for small networks
(Theorems~\ref{thm:main} and~\ref{thm:main2}).  
Namely, we replace ``multistationary'' by ``nondegenerately multistationary''
in the case of two species 
for the following result, which is~\cite[Theorems 5.8 and 5.12]{Joshi:Shiu:Multistationary}:
\begin{theorem}[Classification of multistationary networks with one reversible reaction and one irreversible reaction, or two reversible 
reactions~\cite{Joshi:Shiu:Multistationary}] \label{prop:i=1-r=1}
Let $G$ be a network consisting of:
\begin{itemize}
\item a reversible-reaction pair $y \lra y'$
and an irreversible reaction $\widetilde y \to \widetilde y'$ (Case 1), or  
\item two reversible-reaction pairs, $y \lra y'$ and $\widetilde y \lra \widetilde y'$ (Case 2). 
\end{itemize}
Then the following statements are equivalent:
\begin{enumerate}[(1)]
\item $G$ is multistationary. 
\item the reaction vectors are (nontrivial) scalar multiples of each other: 
 $y'-y = \lambda (\widetilde y' - \widetilde y)$ for some $0 \neq \lambda \in \mathbb{R}$, and, for some species $i$, the embedded network of $G$ obtained by removing all species except $i$ is:
 \begin{itemize}
 \item in Case 1, a 2-alternating network (``$\lra \, \, \to$''
or
 ``$ \leftarrow \, \, \lra$''), or 
 \item in Case 2, a 3-alternating network (``$\lra \, \, \lra$'').%
\end{itemize}
\end{enumerate}
\end{theorem}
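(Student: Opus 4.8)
The plan is to reduce multistationarity of these two-species networks to a one-species problem, where the question is governed by a Descartes-type sign count that is exactly the combinatorial \emph{alternating} condition. I first dispose of the case where the two reaction vectors are linearly \emph{independent}, which forces the stoichiometric subspace $S=\spn\{y'-y,\widetilde y'-\widetilde y\}$ to be all of $\R^2$. Under mass-action the right-hand side of the ODE always lies in $S$, so when $S=\R^2$ a positive point is a steady state iff the scalar coefficient of each reaction vector vanishes. In Case~1 one of these coefficients is the strictly signed term $\kappa_3 x^{\widetilde y}>0$ contributed by the single irreversible reaction, which never vanishes on $\R_{>0}^2$; hence there are \emph{no} positive steady states. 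In Case~2 the two conditions are binomial, $x^{y'-y}=\kappa_1/\kappa_2$ and $x^{\widetilde y'-\widetilde y}=\kappa_3/\kappa_4$, and passing to logarithms gives a linear system with invertible coefficient matrix, so there is \emph{exactly one} positive steady state. Either way $G$ is not multistationary, which already proves that (1) forces the reaction vectors to be scalar multiples, i.e.\ the first half of~(2).

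Next I assume $y'-y=\lambda v$ and $\widetilde y'-\widetilde y=v$ with $v\neq 0$. Then $\dot x=g(x)\,v$ for a single polynomial, namely $g=\lambda(\kappa_1 x^y-\kappa_2 x^{y'})+\kappa_3 x^{\widetilde y}$ in Case~1 and $g=\lambda(\kappa_1 x^y-\kappa_2 x^{y'})+(\kappa_3 x^{\widetilde y}-\kappa_4 x^{\widetilde y'})$ in Case~2, so the positive steady states are precisely the positive zeros of $g$, and $G$ is multistationary iff some compatibility class, a line segment in direction $v$, meets $\{g=0\}$ in at least two points. A useful observation is that the monomial $\tau:=x^v$ is strictly increasing along each such line, since its logarithmic derivative is $v_1^2/x_1+v_2^2/x_2>0$, so $\tau$ is a faithful coordinate along a class. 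The bridge to one species is to project all complexes onto coordinate $i$, producing the embedded one-species network $G_i$ whose steady-state polynomial has the same sign pattern as the $i$-slice of $g$.

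The one-species analysis is then classical. For a one-species network the steady-state equation is a univariate real polynomial $p(x)=\sum_k c_k x^{m_k}$ whose signs $\sgn c_k$ are fixed by the arrow directions (a rightward arrow contributes $+$, a leftward one $-$, and a reversible pair contributes one of each at its two complexes). By Descartes' rule the number of positive roots is at most the number of sign changes of $(c_k)$ ordered by exponent, with matching parity; conversely, for these few-term polynomials the rate constants can be chosen to realize two positive roots whenever two sign changes are available. ``Two sign changes available'' is exactly the alternating condition: the patterns $\lra\ \to$ and $\leftarrow\ \lra$ (Case~1) and $\lra\ \lra$ (Case~2) are precisely the orderings of the three projected complexes for which the signs read $-,+,-$ or $+,-,+$. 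Hence an embedded one-species network is multistationary iff it is alternating.

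Finally I assemble the two directions. For \emph{sufficiency}, I lift an alternating $G_i$ (say $i=1$, $v_1\neq 0$) to $G$: take the two positive roots $r,r'$ of $p_1$ for suitable rates and seek steady states of $G$ near $(r,C)$ and $(r',C)$ with $C$ large. Rescaling the rate constants by the appropriate powers of $C$ absorbs the $x_2$-monomials, so as $C\to\infty$ the defining equations converge to $p_1(r)=p_1(r')=0$, and an implicit-function and continuity argument yields genuine positive steady states of $G$ on a single compatibility class; the case $v_1=0$ is handled symmetrically via the $x_2$-projection. For \emph{necessity} I must show that if neither embedded network is alternating then, along every line in direction $v$, the sign structure of $g$ forbids a second zero. \textbf{This necessity step is the main obstacle}: because $g$ restricted to a straight compatibility class is \emph{not} a polynomial in the monotone coordinate $\tau$, a naive Descartes bound does not apply, and one must instead control the zeros of $g$ through the Newton polygon of its few exponents together with the monotonicity of $\tau$ (equivalently, degenerate a witness of two steady states into a one-species multistationary network). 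I expect ruling out such ``mixed'' multistationarity invisible in any single projection, together with the bookkeeping of the sign of $\lambda$ and the orderings of the projected complexes, to be where the real work lies; the parallel-vector reduction and the Descartes realizability are by comparison routine.
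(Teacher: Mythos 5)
The theorem you are asked to prove is not actually proved in this paper: it is imported verbatim from Joshi and Shiu (Theorems 5.8 and 5.12 of the cited work), and the paper's own contribution is the separate upgrade to \emph{nondegenerate} multistationarity. So your proposal has to stand on its own, and it does not: it contains an acknowledged, genuine gap. Your treatment of the linearly independent case is correct and complete (in Case 1 the coefficient $\kappa_3\mathbf{x}^{\widetilde y}$ of $\widetilde y'-\widetilde y$ never vanishes on $\R^2_{>0}$, so there are no positive steady states; in Case 2 the binomial conditions give a unique positive steady state), and this yields the first half of (2). But the heart of the theorem is the equivalence, for parallel reaction vectors, between multistationarity and the existence of an alternating $1$-species embedded network, and you explicitly leave the necessity direction open (``this necessity step is the main obstacle \dots where the real work lies''). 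That direction \emph{is} the theorem: after substituting the conservation law $b = T \pm \mu a$, the steady-state equation becomes a polynomial of the mixed form $(T-\mu a)^{n_2} - l\,a^{p_1}(T-\mu a)^{n_1} + m\,a^{p_2}$, which is not a fewnomial in $a$, and, as you yourself note, is not a polynomial in your monotone coordinate $\tau = \mathbf{x}^v$ either; so no off-the-shelf Descartes bound rules out a second root in the non-alternating cases. One must carry out a sign-variation analysis of these mixed forms case by case (this is precisely what Joshi--Shiu do, and it is the same algebraic setting in which the present paper's Proposition~\ref{Prop:PolyTwoDistincPositiveRoots} has to work hard for the nondegenerate version). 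Naming the obstacle is not the same as overcoming it.

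There is also a flaw in your sufficiency argument as written. With rate constants rescaled by powers of $C$, your two candidate steady states sit at $(r,C)$ and $(r',C)$, which lie on a \emph{horizontal} line; but the stoichiometric compatibility classes are lines of slope $\pm\mu\neq 0$ (if $\mu=0$ the network degenerates to the one-species case), so these two points lie in \emph{different} classes and do not witness multistationarity, which by Definition~\ref{def:mss} requires two positive steady states in one class. The repair is close to what you gesture at: fix the single class through $(r,C)$ and consider $h_C(a) := g\bigl(a,\,C\pm\mu(a-r)\bigr)$ with the rescaled constants; then $h_C \to p_1$ uniformly on compact subsets of $(0,\infty)$ as $C\to\infty$, and provided $r,r'$ are chosen as \emph{simple} roots of $p_1$ (available from the converse of Descartes' rule for the projected trinomial/quadrinomial), $h_C$ has two distinct zeros near $r$ and $r'$ for $C$ large, both on that one class. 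With that correction the sufficiency half goes through, but since the necessity half remains unproven, the proposal falls well short of a proof of the stated equivalence.
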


\noindent
``Embedded'' and ``alternating'' networks are defined later (Definitions~\ref{def:emb} and~\ref{def:2-sign-change}).  

As an example, consider the network $G=\{0 \lra A+B~,~2A+B\to 3A+2B\}$.
Here two species, $A$ and $B$, are produced at the same rate (hence, $0 \to A+B$), and when they bind to each other, they are transported out of the cell ($0 \leftarrow A+B$) or, in the case of two units of $A$ and one of $B$ binding,
 they upregulate their own production ($2A+B \to 3A+2B$).  
Removing $B$ yields the network
$\{0 \lra A~,~2A \to 3A\}$, which informally has the form ``$\lra \, \, \to$''.
So, by Theorem~\ref{prop:i=1-r=1},
network $G$ is multistationary -- and 
our contribution here is to show that 
 $G$ is in fact nondegenerately multistationary (by Theorem~\ref{thm:main}).
Although we can obtain the same result 
by analyzing this network by hand, 
 we can now decide nondegenerate multistationarity quickly for this network and many others.

Indeed, our results add to the list of known results on nondegenerate multistationarity for small networks, summarized in Table~\ref{tab:results} (for details, see Section~\ref{sec:bkrd}).  Additionally, our proofs may point the way toward more results to add to the table, specifically results that elevate multistationarity to nondegenerate multistationarity.  

\begin{table}
\renewcommand{\arraystretch}{1.3}
\centering
\begin{tabular}{@{}lll@{}}
\toprule
Network property & Nondegenerately multistationary? \\ 
\hline
Network with only 1 species ($s=1$) & 
\begin{tabular}[c]{@{}l@{}}If and only if some subnetwork is \\ \quad 2-alternating (Proposition~\ref{prop:main-prior}.1)~\cite{Joshi:Shiu:Multistationary} \end{tabular} 
\\

\begin{tabular}[c]{@{}l@{}}
Network consists of 1 reaction ($r=1$) \\ \quad or 1 reversible-reaction pair\end{tabular} & No (Proposition~\ref{prop:main-prior}.2)~\cite{Joshi:Shiu:Multistationary}\\

Network consists of 2 reactions ($r=2$) & See Proposition~\ref{prop:main-prior}.3~\cite{Joshi:Shiu:Multistationary} \\
$r+s \leq 3$ & No (\cite[Corollary 3.8]{Joshi:Shiu:Multistationary}) \\

\begin{tabular}[c]{@{}l@{}}$s=2$ and 1 irreversible reaction \\ \quad  and 1 reversible-reaction pair\end{tabular}  & See Theorem~\ref{thm:main} \\
$s=2$ and  2 reversible-reaction pairs & See Theorem~\ref{thm:main2} \\
\bottomrule
\end{tabular}
\caption{Summary of results on nondegenerate multistationarity for small reactions.  Here $r$~denotes the number of reactions and $s$ the number of species.  See Section~\ref{sec:bkrd}.}
\label{tab:results}
\end{table}

The reader may be wondering what we gain in focusing on small networks, rather than larger networks coming from applications.  The reason stems from a number of recent results on how a given network's capacity for multistationarity arises from that of certain smaller networks~\cite{BP-inher,Joshi:Shiu:Atoms}.  
Here is one such ``lifting'' result, stated informally:
if $N$ is a subnetwork of $G$ 
and both networks have the same number of conservation laws, then if $N$ is nondegenerately multistationary, then $G$ is too (see Lemma~\ref{lem:lift}).  
Therefore, we would like a catalogue of small nondegenerately multistationary networks against which the networks $N$ can be checked.  Our work is therefore a step in this direction, following earlier 
work~\cite{BP-inher,FSW,Joshi:Shiu:Atoms,Joshi:Shiu:Multistationary}. 

The techniques we harness in this work are largely algebraic.  Specifically, we 
prove Proposition~\ref{Prop:PolyTwoDistincPositiveRoots}, which concerns the following univariate polynomial:
 \begin{eqnarray*} 
	g(z) & = & (T-\mu z)^{n_2} - l z^{p_1} (T- \mu z)^{n_1} + m z^{p_2}~,  \end{eqnarray*}
	where 
$\mu>0$ and $1 \leq p_1 < p_2$ and $0 \leq n_1 < n_2$.
We show that if there exist parameters $(T,l,m) \in \mathbb{R}^3_{>0}$ such that 
the polynomial admits two or more positive real roots, 
then we can perturb the parameters so that the polynomial admits two or more {\em multiplicity-one} roots.

While such a result is straightforward for a univariate polynomial with {\em arbitrary} coefficients, here the coefficients of $g(z)$ depend only on $T,l$, and $m$ although the degree of $g(z)$ is arbitrarily high.  Thus, the coefficients satisfy relations which might {\em a priori} preclude simple real roots. Indeed, such obstructions and other similar obstructions occur for sparse polynomials; for instance, trinomials with 
 coprime exponents admit at most three distinct real roots (see \cite[Theorem 4.8 and the following remark]{Theobald:deWolff:Trinomials}).

Accordingly, like \cite{Dickenstein:Invitation}, this work is an invitation to real algebraic geometers.  

We hope to convey that the study of reaction systems leads to interesting problems in real algebraic geometry. Indeed, algebraic techniques, such as elimination of variables and steady-state parametrizations, have already contributed significantly to recent progress in the field, e.g., \cite{CFMW,perspective,dexter2015,Karin02,case-study,messi,sweeney}.

The outline of our work is as follows. 
Section~\ref{sec:bkrd} provides background on chemical reaction systems --
including a summary of prior results on nondegenerate multistationarity for small networks -- and configurations of polynomials.  We state our main results in Section~\ref{sec:results} and then prove them in Section~\ref{sec:proof}.  
In Section~\ref{sec:3+}, we describe our efforts toward extending our results to more species.  
Finally, we end with a Discussion in Section~\ref{sec:disc}.

\section{Background} \label{sec:bkrd}
In this section we provide background on chemical reaction systems (Section~\ref{sec:CRS}),
their steady states (Section~\ref{sec:steady-state}),
and polynomials and their discriminants (Section~\ref{sec:discriminant}).

\subsection{Chemical reaction systems} \label{sec:CRS}
Our introduction to chemical reaction systems follows closely the notation in~\cite{Joshi:Shiu:Multistationary}. 

An example of a {{\em \struc{chemical reaction}}} is $A+B \to 3A + C$, 
in which one unit of chemical {\em \struc{species}} $A$ and one of $B$ react 
to form three units of $A$ and one of $C$.  
The  {\em \struc{reactant}} $A+B$ and the {\em \struc{product}} $3A+C$ are called {\em \struc{complexes}}. 
 A reaction network
consists of finitely many reactions (see Definition~\ref{def:crn}).

\begin{definition} \label{def:crn}
A \struc{{\em reaction network}} $\struc{G}:=(\SS,\CC,\RR)$
consists of three finite sets:
\begin{enumerate}
\item a set of \struc{{\em species}} $\struc{\SS} := \{A_1,A_2,\dots, A_s\}$, 
\item a set  $\struc{\CC} := \{y_1, y_2, \dots, y_p\}$ of \struc{{\em complexes}} (finite nonnegative-integer combinations of the species), and 
\item a set of \struc{{\em reactions}}, which are ordered pairs of complexes, excluding diagonal pairs: $\struc{\RR} \subseteq (\CC \times \CC) \setminus \{ (y,y) \mid y \in \CC\}$.
\end{enumerate}
A \struc{{\em subnetwork}} of a network $G = (\SS, \CC, \RR)$ is a network $\struc{G'} := (\SS', \CC', \RR')$ with 
$\SS' \subseteq \SS$, 
$\CC' \subseteq \CC$, and 
$\RR' \subseteq \RR$.
\end{definition}

\noindent
Throughout this work, $s$ and $r$ denote the numbers of species and reactions, respectively.  
A reaction $y_i \to y_j$ is \struc{{\em reversible}} if its {\em reverse reaction} $y_j \to y_i$ is also in $\RR$; 
we denote such a pair by $y_i \rightleftharpoons y_j$.

We write the $i$-th complex as $y_{i1} A_1 + y_{i2} A_2 + \cdots + y_{is}A_s$ (where $y_{ij} \in \mathbb{Z}_{\geq 0}$ 
is the \struc{{\em stoichiometric coefficient}} of $A_j$, for $j=1,2,\dots,s$), which defines the following monomial:
$$ \struc{\mathbf{x}^{y_i}} \,\,\, := \,\,\, x_1^{y_{i1}} x_2^{y_{i2}} \cdots  x_s^{y_{is}}~. $$
For example, the two complexes in the reaction $A+B \to 3A + C$ yield 
the monomials $x_{A}x_{B}$ and $x^3_A x_C$, which determine the vectors 
$y_1=(1,1,0)$ and $y_2=(3,0,1)$.  
These vectors form the rows of a $p \times s$-matrix of nonnegative integers,
denoted by $\struc{Y}:=(y_{ij})$.
Next, the unknowns $\struc{x_1},\struc{x_2},\ldots,\struc{x_s}$ denote the
concentrations of the $s$ species in the network,
and we view them as functions $\struc{x_i(t)}$ of time $t$.

For a reaction $y_i \to y_j$ from the $i$-th complex to the $j$-th
complex, the \struc{{\em reaction vector}}
 $\struc{y_j-y_i}$ encodes the
net change in each species that results when the reaction takes
place.  The \struc{{\em stoichiometric matrix}} $\struc{\Gamma}$ is the $s \times r$ matrix whose $k$-th column 
is the reaction vector of the $k$-th reaction, that is, it is the vector $y_j - y_i$ if $k$ indexes the reaction $y_i \to y_j$.
We associate to each reaction a \struc{{\em rate constant}} $\struc{\kappa_{ij}}$, which is a positive parameter.  


The choice of kinetics is represented by a locally Lipschitz function $R:\Rnn^s \to \R^r$ that encodes the reaction rates of the $r$ reactions as functions of the $s$ species concentrations. The \struc{{\em reaction kinetics system}} defined by a reaction network $G$ and reaction rate function $R$ is given by the following system of ODEs:
\begin{align} \label{eq:ODE}
\frac{d \mathbf{x}}{dt} ~ = ~ \Gamma \cdot R( \mathbf{x})~.
\end{align}
For \struc{{\em mass-action kinetics}}, the assumption for this work, the coordinates of $R$ are
$ R_k( \mathbf{x})=  \kappa_{ij}  \mathbf{x}^{y_i}$, 
 if $k$ indexes the reaction $y_i \to y_j$.  
A \struc{{\em chemical reaction system}} refers to the 
dynamical system (\ref{eq:ODE}) arising from a chemical reaction
network $(\SS, \CC, \RR)$ and a choice of rate constants $(\kappa^*_{ij}) \in
\mathbb{R}^{r}_{>0}$ (recall that $r$ is the number of
reactions) where the reaction rate function $R$ is that of mass-action
kinetics.  Specifically, the mass-action ODEs are the following ones:
\begin{align} \label{eq:ODE-mass-action}
\frac{d\mathbf{x}}{dt} \quad = \quad \sum_{ y_i \to y_j~ {\rm is~in~} \RR} \kappa_{ij} \mathbf{x}^{y_i}(y_j - y_i) \quad =: \quad \struc{f_{\kappa}(\mathbf{x})}~.
\end{align}

The \struc{{\em stoichiometric subspace}}, denoted by $S$,
 is the vector subspace of
$\mathbb{R}^s$ spanned by the reaction vectors
$y_j-y_i$:
\begin{equation*}
  \struc{\St}~:=~ {\rm span} \left( \{ y_j-y_i \mid  y_i \to y_j~ {\rm is~in~} \RR \} \right)~.
\end{equation*}
Note that $\St = \im(\Gamma)$, where $\Gamma$ is the stoichiometric matrix.
For the network consisting of the single reaction $A+B \to 3A + C$, we have that $y_2-y_1 =(2,-1,1)$ spans $\St$.

The  vector $\frac{d \mathbf{x}}{dt}$ in  (\ref{eq:ODE}) lies in
$\St$ for all time $t$.   
In fact, a trajectory $\mathbf{x}(t)$ beginning at a positive vector $\mathbf{x}(0)=\mathbf{x}^0 \in
\R^s_{>0}$ remains in the following \struc{{\em stoichiometric compatibility class}}:
\begin{align}\label{eqn:invtPoly}
\struc{\invtPoly}~:=~(\mathbf{x}^0+\St) \cap \mathbb{R}^s_{\geq 0}~
\end{align}
for all positive time.  That is, $\invtPoly$ is forward-invariant with
respect to the dynamics~(\ref{eq:ODE}).    

\begin{example} \label{ex:main}
Consider again the network from the introduction:
	\[
	\left\{
		0 \overset{k_1}{\underset{k_2}\lra} A+B \quad \quad 
		2A+B 
		\overset{k_3}\to 3A+2B
	\right\}~.
	\]
The mass-action ODEs are:
	\[
	\frac{dx_A}{dt}~=~
	\frac{dx_B}{dt}~=~
	k_1 - k_2 x_A x_B + k_3 x_A^2 x_B~,
	\]
and the stoichiometric subspace is $S={\rm span}\{(1,1)^t\}$. Thus, the stoichiometric compatibility classes are the rays 
$\invtPoly = \{ (a,a+T) \mid a \geq 0,~ a+T \geq 0 \}$ (for some $T \in \mathbb{R}$) in Figure~\ref{fig:classes}.
\end{example}

\begin{center}
\begin{figure}[ht]
\begin{tikzpicture}[scale=0.7]
\begin{axis}[xmin=0,xmax=2,ymin=0,ymax=2, axis lines = middle, xlabel=$x_A$,ylabel=$x_B$]
\addplot +[mark=none,color=blue,dashed] coordinates {(0,0) (2.5,2.5)};
\addplot +[mark=none,color=blue,dashed] coordinates {(0.5,0) (2.5,2.0)};
\addplot +[mark=none,color=blue,dashed] coordinates {(1,0) (2.5,1.5)};
\addplot +[mark=none,color=blue,dashed] coordinates {(1.5,0) (2.5,1)};
\addplot +[mark=none,color=blue,dashed] coordinates {(0,0.5) (2,2.5)};
\addplot +[mark=none,color=blue,dashed] coordinates {(0,1) (1.5,2.5)};
\addplot +[mark=none,color=blue,dashed] coordinates {(0,1.5) (1,2.5)};
\end{axis}
\end{tikzpicture}
\caption{Stoichiometric compatibility classes for the network in Example~\ref{ex:main}.
\label{fig:classes}}
\end{figure}
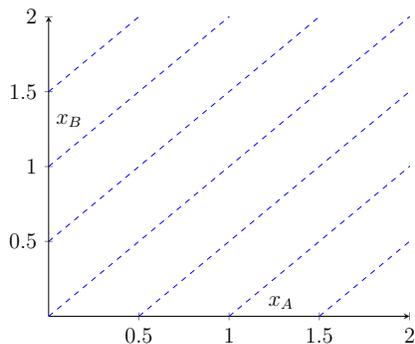
\end{center}

\subsection{Steady states} \label{sec:steady-state}
A \struc{{\em steady state}} of a reaction kinetics system is a nonnegative concentration vector $\struc{\mathbf{x}^*} \in \Rnn^s$ at which the right-hand side of the ODEs~\eqref{eq:ODE}  vanish: $f_{\kappa} (\mathbf{x}^*) = 0$.  
A steady state $\mathbf{x}^*$ is \struc{{\em nondegenerate}} if ${\rm Im}\left( df_{\kappa} (\mathbf{x}^*)|_{S} \right) = \St$, where $\struc{df_{\kappa}(\mathbf{x}^*)}$ is the Jacobian matrix of $f_{\kappa}$ at $\mathbf{x}^*$.
We are interested in \struc{{\em positive steady states}} $\mathbf{x} ^* \in \mathbb{R}^s_{> 0}$.

\medskip

\begin{definition}~ \label{def:mss}
\begin{enumerate}
\item  A reaction kinetics system~\eqref{eq:ODE} is \struc{{\em multistationary}} 
if there exists a stoichiometric compatibility class~\eqref{eqn:invtPoly}
with two or more positive steady states. 
Similarly, a reaction kinetics system is \struc{{\em nondegenerately multistationary}}
 if it admits two or more nondegenerate
 positive steady states in some stoichiometric compatibility class.

\item A network is \struc{{\em multistationary}}
if there exist some choice of 
positive rate constants $\kappa_{ij}$
such that the resulting 
mass-action kinetics system~\eqref{eq:ODE-mass-action}
is multistationary.  
Analogously, a network may be 
\struc{{\em nondegenerately multistationary}}.

\item 
A network \struc{{\em admits $k$ positive steady states}} if there exists a choice of positive rate constants so that the resulting mass-action system
has exactly $k$ positive steady states in some stoichiometric compatibility class.
Similarly, a network may \struc{{\em admit $k$ nondegenerate positive steady states}}.
\end{enumerate}

\end{definition}

We now state the conjecture mentioned in the introduction.

\begin{conjecture}[Nondegeneracy Conjecture \cite{Joshi:Shiu:Multistationary}] \label{conj:nondeg}
Consider a network $G$ that does not admit infinitely many positive steady states (in any stoichiometric compatibility class).  Then if $G$ admits $k$ positive steady states, then $G$ admits $k$ {\em nondegenerate} positive steady states.
\end{conjecture}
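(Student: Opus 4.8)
\emph{The plan.} The conjecture is a genericity statement in disguise: nondegeneracy of a positive steady state $x^*$ asks that $df_{\kappa}(x^*)$ restrict to an isomorphism of the stoichiometric subspace $\St$, which is exactly the condition that $x^*$ be a \emph{simple} zero of the square system obtained by augmenting the steady-state equations $f_{\kappa}(x)=0$ with the linear conservation laws that fix a stoichiometric compatibility class $\invtPoly$. Under the hypothesis that $G$ has only finitely many positive steady states, this square system is zero-dimensional, so ``$k$ positive steady states'' means $k$ positive zeros counted without multiplicity, and a degenerate steady state is precisely a positive zero of multiplicity $\geq 2$. First I would fix rate constants $\kappa^*$ and a class realizing the $k$ positive steady states, and set up the combined system $F_{\kappa,T}(x)=0$, where $T$ records the conserved totals that index the class.

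\emph{Discriminant step.} Next I would pass to the parameter space of pairs $(\kappa,T)$ and consider the \emph{degenerate locus} $\Delta$, the set of parameters for which $F_{\kappa,T}$ has a positive zero at which the Jacobian is singular. Eliminating $x$ (by a resultant of $F$ and its Jacobian determinant) shows $\Delta$ lies inside an algebraic hypersurface, a discriminant. Provided this discriminant is not identically zero on the admissible parameters --- equivalently, provided $G$ is not forced to carry only degenerate steady states --- $\Delta$ is a proper subvariety and its complement is dense. The conjecture would then follow if one could move $(\kappa^*,T^*)$ into this dense complement \emph{without decreasing the number of positive zeros}, since all $k$ zeros would then be simple.

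\emph{The main obstacle.} The difficulty --- and the reason the conjecture remains open in general --- is that the map $(\kappa,T)\mapsto(\text{coefficients of }F)$ is highly non-surjective: the monomial supports are fixed by the network and the coefficients obey product relations inherited from the mass-action structure, so the admissible parameters trace out only a thin, sparse slice of coefficient space. On such a slice one cannot invoke a generic perturbation; sparse systems obey genuine root obstructions (for instance a coprime-exponent trinomial has at most three real roots, as noted for \cite{Theobald:deWolff:Trinomials}). Thus I must both (i) certify that the sparse slice meets the complement of $\Delta$ arbitrarily close to $(\kappa^*,T^*)$, and (ii) choose the perturbation direction so that a degenerate positive zero of multiplicity $m$ resolves into simple zeros that remain real and positive, rather than drifting off the positive orthant or into conjugate complex pairs. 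Controlling (ii) is the heart of the matter: it requires a local deformation analysis at the multiple root (a Newton--Puiseux or sign-change argument) tailored to the specific support.

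\emph{A tractable route for small networks.} Rather than attack the multivariate discriminant head-on, I would first reduce to a single variable, as the present paper does. Using the conservation laws to eliminate all but one concentration collapses $F_{\kappa,T}$ to a univariate polynomial of the explicit sparse shape
\[
	g(z) ~=~ (T-\mu z)^{n_2} - l z^{p_1}(T-\mu z)^{n_1} + m z^{p_2},
\]
whose positive roots correspond to positive steady states and whose multiple roots correspond to degenerate ones. For the network classes at hand, the entire conjecture then reduces to the assertion that if $g$ has two or more positive roots for some $(T,l,m)\in\R^3_{>0}$ then, after a perturbation of $(T,l,m)$ within the positive orthant, it has two or more \emph{simple} positive roots --- a one-variable statement whose discriminant is explicit and where a double positive root can be split into two simple positive roots by a monotone move in a single parameter. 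Finally, the lifting result (Lemma~\ref{lem:lift}) propagates nondegenerate multistationarity from such small subnetworks to larger $G$ sharing the same number of conservation laws, so a catalogue of small nondegenerate cases feeds directly back into the general conjecture.
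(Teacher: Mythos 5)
First, a point of order: Conjecture~\ref{conj:nondeg} is exactly that --- a conjecture. The paper does not prove it; it states explicitly that the conjecture is known only for networks with one species and networks with at most two reactions, and the paper's own contribution (Theorems~\ref{thm:main} and~\ref{thm:main2}) verifies it only for 2-species networks consisting of one reversible pair plus one irreversible reaction, or two reversible pairs. So there is no proof in the paper to compare yours against; your proposal must be judged as an attempted proof of an open problem.

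Judged that way, it has a genuine, central gap: it is circular at the discriminant step. You write ``provided this discriminant is not identically zero on the admissible parameters \ldots\ $\Delta$ is a proper subvariety and its complement is dense.'' But showing that the sparse slice of admissible parameters is \emph{not} contained in the degenerate locus is precisely the content of the conjecture --- it is not a side hypothesis you may assume. The paper makes this point immediately after stating Proposition~\ref{Prop:PolyTwoDistincPositiveRoots}: the three-parameter family~\eqref{Equ:OurPolynomialDefinition} \emph{a priori} could lie entirely inside $\nabla$, and ruling this out requires the bespoke computation of Lemma~\ref{Lem:ReduceMultiplicityOfRoots} (the invariant one-parameter family~\eqref{Equ:LinearInvariantSubspace} together with the sign contradiction~\eqref{eq:difference}), not an appeal to genericity. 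Your step (ii) --- resolving a multiple positive root into simple roots that stay real and positive --- is likewise named but never carried out; in the paper this is the three-case analysis in the proof of Proposition~\ref{Prop:PolyTwoDistincPositiveRoots}, and even there it produces only \emph{two} simple positive roots. That exposes a further gap: the conjecture demands that $k$ positive steady states become $k$ nondegenerate ones, whereas your univariate route (like the paper's theorems) addresses only ``two or more,'' and only for networks whose conservation laws collapse the system to one variable of the special sparse shape $g$; a general network gives a genuinely multivariate system with no such reduction. Finally, your closing remark misstates Lemma~\ref{lem:lift}: it requires the subnetwork to have the \emph{same stoichiometric subspace} as $G$ (not merely the same number of conservation laws), and it yields \emph{at least} $m$ nondegenerate steady states, which is not count-preserving. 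In short, your plan correctly identifies the obstacles and reproduces the paper's strategy for the small cases, but as a proof of Conjecture~\ref{conj:nondeg} it assumes the crux and establishes none of the steps.
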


We know only two classes of network for which this conjecture has been proven: networks with only one species~\cite[Theorem 3.6]{Joshi:Shiu:Atoms}, and networks with up to two reactions (see \cite[Theorems 5.1 and 5.2]{Joshi:Shiu:Atoms} and their proofs).  One goal of the present work is to resolve the conjecture for 2-species networks comprising one irreversible reaction and one reversible-reaction pair, or two reversible-reaction pairs.

We will use two results of Joshi and Shiu.
The first
~\cite[Theorem~3.1]{Joshi:Shiu:Atoms} ``lifts''  steady states from a subnetwork to a larger network if they share the same stoichiometric subspace:
\begin{lemma} \label{lem:lift}
Let $N$ be a subnetwork of a reaction network $G$ that has the same stoichiometric subspace as $G$.  
If $N$ admits $m$ nondegenerate 
positive steady states (in some stoichiometric compatibility class, for some choice of rate constants), then $G$ admits at least $m$ nondegenerate positive steady states.
\end{lemma}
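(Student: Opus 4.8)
The plan is a perturbation argument driven by the implicit function theorem: I would hold the rate constants of $N$ fixed, switch on the extra reactions of $G$ with a small parameter $\epsilon > 0$, and use nondegeneracy to continue each steady state of $N$ to a nearby steady state of $G$.

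First I would pass to a square reduced system. Suppose $N$, with rate constants $\kappa^*$, has $m$ nondegenerate positive steady states $\mathbf{x}^{(1)}, \dots, \mathbf{x}^{(m)}$ in one compatibility class $\invtPoly = (\mathbf{x}^0 + \St) \cap \Rnn^s$. Since $N$ and $G$ share the stoichiometric subspace $\St$, both mass-action fields take values in $\St$, so steady states in $\invtPoly$ may be sought inside the fixed affine space $\mathbf{x}^0 + \St$. Fixing a basis $v_1, \dots, v_d$ of $\St$ (with $d = \dim \St$) and the parametrization $\phi(\xi) = \mathbf{x}^0 + \sum_k \xi_k v_k$, I would set $F^N := f^N_{\kappa^*} \circ \phi$ and, identifying $\St \cong \R^d$ through this basis, regard it as a map $\R^d \to \R^d$. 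The crucial observation is that a positive steady state $\mathbf{x}^* = \phi(\xi^*)$ is nondegenerate exactly when $\xi^*$ is a simple zero of $F^N$: the defining condition $\operatorname{Im}(df_{\kappa}(\mathbf{x}^*)|_{\St}) = \St$ says precisely that $df_{\kappa}(\mathbf{x}^*)|_{\St} \colon \St \to \St$ is bijective, and in the chosen basis this map is the Jacobian $dF^N(\xi^*)$. This is where the shared-subspace hypothesis is essential: it makes the reduced system for $G$ square of the same dimension $d$ and makes the two nondegeneracy notions literally the same condition.

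Next I would switch on the remaining reactions. Writing $G$'s reactions as those of $N$ together with the extra set $\RR''$, I would assign rate constant $\epsilon$ to each reaction in $\RR''$ while keeping $\kappa^*$ on $N$, and set $F^G_\epsilon := f^G \circ \phi \colon \R^d \to \R^d$, a polynomial in $(\xi, \epsilon)$ with $F^G_0 = F^N$. If $\xi^{(1)}, \dots, \xi^{(m)}$ denote the $\phi$-preimages of $\mathbf{x}^{(1)}, \dots, \mathbf{x}^{(m)}$, then by the previous paragraph each $dF^G_0(\xi^{(k)})$ is invertible, so the implicit function theorem yields, for each $k$ and all sufficiently small $\epsilon > 0$, a zero $\xi^{(k)}(\epsilon)$ of $F^G_\epsilon$ depending continuously on $\epsilon$ with $\xi^{(k)}(0) = \xi^{(k)}$.

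Finally I would confirm the lifted zeros are bona fide nondegenerate positive steady states of $G$. For small $\epsilon > 0$: the determinant $\det dF^G_\epsilon(\xi^{(k)}(\epsilon))$ is continuous in $\epsilon$ and nonzero at $\epsilon = 0$, so it stays nonzero and $\phi(\xi^{(k)}(\epsilon))$ is nondegenerate; positivity of $\phi(\xi^{(k)})$ is open, so $\phi(\xi^{(k)}(\epsilon)) \in \R^s_{>0}$; the $m$ points stay distinct since they converge to the distinct $\xi^{(k)}$; and all rate constants of $G$ are positive. As every lifted state lies in the single class $\mathbf{x}^0 + \St$, network $G$ has at least $m$ nondegenerate positive steady states there. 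The only real content is the translation in the second paragraph — that the reaction-network notion of nondegeneracy is exactly a square reduced system having a regular zero — which I expect to be the main point to get right; the subsequent continuation, positivity, and distinctness are routine open-condition arguments.
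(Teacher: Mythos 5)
Your proof is correct. Note that the paper itself gives no proof of this lemma: it is quoted as \cite[Theorem~3.1]{Joshi:Shiu:Atoms}, and the argument in that reference is essentially the one you give --- assign rate constant $\epsilon$ to the extra reactions, observe that the shared stoichiometric subspace makes the reduced system square and makes nondegeneracy equivalent to invertibility of the reduced Jacobian, and continue each steady state via the implicit function theorem, with positivity, distinctness, and nondegeneracy preserved as open conditions.
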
 

To state the second result, we must recall some definitions from~\cite{Joshi:Shiu:Multistationary}.

\begin{definition} \label{def:arrow-diagram}
Let $G$ be a reaction network that contains only one species $A$. So, each reaction of $G$ has the form $aA \to bA$, where $a,b \ge 0$ and $a \ne b$. Let $m$ be the number of (distinct) reactant complexes, and let $a_1< a_2 < \ldots < a_m$ be their stoichiometric coefficients. The 
\struc{{\em arrow diagram}} of $G$, denoted $\struc{\rho} = (\rho_1, \ldots , \rho_m)$, is the element of $\{\to , \la, \lradot \}^m$ where:
\begin{equation*}
 \struc{\rho_i}~:=~ 
 \left\lbrace\begin{array}{ll}
   \to & \text{if for all reactions $a_iA \to bA$ in $G$, it is the case that $b > a_i$} \\
   \la & \text{if for all reactions $a_iA \to bA$ in $G$, it is the case that $b < a_i$} \\
   \lradot & \text{otherwise.}
 \end{array}\right.
\end{equation*}
\end{definition}

\begin{definition} \label{def:2-sign-change}
For positive integers $T \geq 1$, a \struc{{\em $T$-alternating network}} is a 1-species network with 
exactly $T+1$ reactions and with arrow diagram $\rho \in \{\to , \la\}^{T+1}$ such that $\rho_i = \to$ if and only if $\rho_{i+1} = \la$ for all $i \in \{1, \ldots, T\}$. 
\end{definition}

\begin{example} \label{ex:alternating} 
Consider the following network:
 	\begin{align*}
	G~=~\{ 0 \leftarrow A \to 2A \lra 3A\}~.
	\end{align*}
 Two 1-alternating subnetworks of $G$ 
	have arrow diagram $(\to, \la)$:
	$\{ A \to 2A,~ 2A \leftarrow 3A\}$ and
	$\{  2A \to 3A,~ 2A \leftarrow 3A\}$.
On the other hand, $\{ 0 \leftarrow A,~ A \to 2A\}$ is {\em not} a 1-alternating subnetwork of $G$: 
	its arrow diagram is $(\lradot)$. 
Finally, $\{ 0 \leftarrow A,~ 2A \to 3A, ~ 2A \leftarrow 3A \}$ is a 2-alternating subnetwork of $G$ with arrow diagram $(\la, \to, \la)$.
\end{example}

Next we define reactant polytopes (Newton polytopes) and box diagrams.

\begin{definition}[\cite{GMS2}] \label{def:NP}
The \struc{{\em reactant polytope}} of a network $G$ is the convex hull of 
the reactants of $G$ (in $\mathbb{R}^s$, where $s$ is the number of species),
that is, the smallest convex set containing the set
$  \{ y_i \mid  y_i \to y_j~ {\rm is~in~} \RR ~ ({\rm for~some~}j) \} $.
\end{definition}

\begin{definition} \label{def:box}
Let $G$ be a network with exactly two species and two reactions, $y \to y'$ and $\widetilde y \to \widetilde y'$, such that the reactant vectors differ in both coordinates (i.e., writing $y=(y_A, y_B)$ and $\widetilde y=(\widetilde y_A, \widetilde y_B)$, then both $y_A \neq \widetilde y_A$ and  $y_B \neq \widetilde y_B$).  The \struc{{\em \bd}} of the network $G$ is the rectangle in $\mathbb{R}^2$ such that
	\begin{enumerate}
	\item the edges are parallel to the axes of $\mathbb{R}^2$, and
	\item the reactants $y$ and $y'$ are two opposite corners of the rectangle.
	\end{enumerate}	
\end{definition}

\begin{remark}
The box diagram is the smallest rectangle containing the reactant polytope.
\end{remark}

We depict a box diagram together with the reaction vectors and the reactant polytope (which in this case is the diagonal of the box that connects the two reactants).  For example, 
consider the network $\{ A \to B , ~ 2A+B \to 3A \}$, which is equivalent to a network considered 
in~\cite[\S 6]{FeinDefZeroOne} and also equivalent to a subnetwork of a bistable network modeling apoptosis~\cite{HoHarrington}.  The box diagram is:
\begin{center}
	\begin{tikzpicture}[scale=.7]
	\draw (-0.5,0) -- (4.2,0);
	\draw (0,-.5) -- (0,2);
	\draw [->] (1,0) -- (0.07,1);
	\draw [->] (2,1) -- (3,0.07);
	\path [fill=gray] (1,0) rectangle (2,1);
	\draw (1,0) --(2,1); 
    \node [below] at (1,0) {$A$};
    \node [left] at (0,1) {$B$};
    \node [above] at (2,1) {$2A+B$};
    \node [below] at (3,0) {~$3A$};
	\end{tikzpicture}
\end{center}
This box diagram has the form of one of the four depicted in Proposition~\ref{prop:main-prior}, part (3) below, so we conclude, by inspection, that the network is nondegenerately multistationary.  Indeed, one of our goals is to obtain more easy-to-check criteria for nondegenerate multistationarity.

The following result is~\cite[Theorems 3.6 and 5.2]{Joshi:Shiu:Multistationary} (and summarized
in Table~\ref{tab:results}):
\begin{proposition} \label{prop:main-prior}
Let $G$ be a reaction network with exactly $r$ reactions and $s$ species. Then:
\begin{enumerate}
	\item If $s=1$, then $G$ is nondegenerately multistationary if and only if $G$ has a 2-alternating subnetwork (i.e., with arrow diagram $(\to, \la, \to)$ or $(\la, \to, \la)$). 
	\item If $r=1$ or $G$ consists of a reversible-reaction pair, then $G$ is not multistationary. 
	\item If $r=2$, then $G$ is nondegenerately multistationary if and only if for some choice of species $i$ and $j$, the projection of the box diagram to the $(i,j)$-plane has one of the following ``zigzag'' forms:
\begin{center}
	\begin{tikzpicture}[scale=.5]
	\path [fill=gray] (0,0) rectangle (1.5,1);
	\draw [->] (0,1) -- (.5,1.3);
	\draw [->] (1.5,0) -- (1,-.3);
	\draw (0,1) --(1.5,0); 
	\path [fill=gray] (3,0) rectangle (4.5,1);
	\draw [->] (3,1) -- (2.5,.7);
	\draw [->] (4.5,0) -- (5,.3);
	\draw (3,1) --(4.5,0); 
	\path [fill=gray] (6,0) rectangle (7.5,1);
	\draw [->] (6,0) -- (5.5,.3);
	\draw [->] (7.5,1) -- (8, .7);
	\draw (6,0) --(7.5,1); 
	\path [fill=gray] (9,0) rectangle (10.5,1);
	\draw [->] (9,0) -- (9.5, -.3);
	\draw [->] (10.5,1) -- (10,1.3);
	\draw (9,0) --(10.5,1);
	\end{tikzpicture}
\end{center}
and, if only one such pair $(i,j)$ exists, then the slope of the marked diagonal is {\em not} $-1$.
\end{enumerate}
\end{proposition}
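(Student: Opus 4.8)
The plan is to reduce all three parts to counting the positive real roots, with multiplicity, of the steady-state function, using two observations: in each genuinely multistationary case the stoichiometric subspace $\St$ is one-dimensional, and a positive steady state is nondegenerate exactly when it is a \emph{simple} root along $\St$. Part~(2) is then immediate. If $r=1$, say the lone reaction is $y \to y'$, then $f_\kappa(\mathbf{x}) = \kappa \mathbf{x}^y (y'-y)$ never vanishes on $\R^s_{>0}$, so there are no positive steady states. If $G$ is a reversible pair $y \lra y'$, then $f_\kappa(\mathbf{x}) = (\kappa_1 \mathbf{x}^y - \kappa_2 \mathbf{x}^{y'})(y'-y)$, which vanishes exactly when $\mathbf{x}^{y-y'} = \kappa_2/\kappa_1$; restricting to a compatibility class $\mathbf{x}(t) = \mathbf{x}^0 + t(y'-y)$, the map $t \mapsto (y-y')\cdot \log \mathbf{x}(t)$ has derivative $-\sum_i (y_i-y_i')^2/x_i(t) < 0$, hence is strictly monotone and has at most one root, so $G$ is not multistationary.

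For part~(1), a one-species network gives $f_\kappa(x) = \sum_{i=1}^m C_i\, x^{a_i}$ with $a_1 < \dots < a_m$ and $C_i = \sum_{a_iA \to bA} \kappa(b-a_i)$; dividing by $x^{a_1}$ shows nondegenerate multistationarity is the existence of two simple positive roots, which by Descartes' rule requires at least two sign changes in $(C_1,\dots,C_m)$. The key point is that $\sgn C_i$ is $+$ at a $\to$ reactant, $-$ at a $\la$ reactant, and \emph{free} (tunable by the rate constants) at a $\lradot$ reactant. For the reverse implication, a $2$-alternating subnetwork (Definition~\ref{def:2-sign-change}) supplies three reactants whose signs can be set to $(+,-,+)$ or $(-,+,-)$; after factoring, the associated trinomial has a single positive critical point, and tuning the coefficients drives its extreme value across zero to produce two simple positive roots, which Lemma~\ref{lem:lift} lifts to $G$ (the subnetwork shares the one-dimensional $\St$). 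For the forward implication I would argue the contrapositive: absent a $2$-alternating subnetwork, a combinatorial analysis of the arrow diagram---tracking how the free signs at $\lradot$ reactants interact with the fixed $\to$ and $\la$ signs---shows $(C_1,\dots,C_m)$ admits at most one sign change for every choice of rate constants, so Descartes bounds the positive roots by one and $G$ is not multistationary.

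For part~(3) the ODE is $f_\kappa(\mathbf{x}) = \kappa_1 \mathbf{x}^y(y'-y) + \kappa_2 \mathbf{x}^{\tilde y}(\tilde y'-\tilde y)$. If the two reaction vectors are linearly independent, $f_\kappa = 0$ forces both monomials to vanish, which is impossible on $\R^s_{>0}$; so assume $y'-y = \lambda(\tilde y'-\tilde y) =: \lambda v$. Then $f_\kappa = (\lambda\kappa_1 \mathbf{x}^y + \kappa_2 \mathbf{x}^{\tilde y})\,v$ vanishes on $\R^s_{>0}$ only if $\lambda<0$, and then exactly when $c\,\mathbf{x}^{p}=1$, where $p := y-\tilde y$ and $c := -\lambda\kappa_1/\kappa_2 > 0$ is free. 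Parametrizing a compatibility class by $\mathbf{x}(t) = \mathbf{x}^0 + tv$, steady states are the roots of $h(t) := \log c + \sum_i p_i \log x_i(t)$, and a root is nondegenerate iff $h'(t) = \sum_i p_i v_i / x_i(t) \neq 0$ there. Since each summand is monotone in $t$, $h'$ has constant sign---so $h$ has at most one root---unless some pair $(i,j)$ satisfies $(p_i v_i)(p_j v_j) < 0$; this is precisely the condition that the box diagram projected to species $(i,j)$ be a zigzag, because $\sgn(p_i p_j)$ fixes the sign of the diagonal's slope and $\sgn(v_i v_j)$ fixes the quadrant of the (anti-parallel, since $\lambda<0$) reaction vectors, so that the admissible configurations $p_i p_j\, v_i v_j < 0$ are exactly the four depicted pictures. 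When $(i,j)$ is the unique admissible pair, $h'$ has the sign of $N(t) = p_i v_i\, x_j(t) + p_j v_j\, x_i(t)$, linear in $t$ with leading coefficient $v_i v_j(p_i+p_j)$; the diagonal has slope $-1$ exactly when $p_i+p_j=0$, whereupon $N$ is constant, $h$ is monotone, and there is at most one root---forcing the exclusion of slope $-1$. When instead $p_i+p_j\neq0$, $N$ changes sign once, $h$ acquires a single strict interior extremum with $h \to \mp\infty$ at the ends of the valid segment; choosing $\mathbf{x}^0$ to place the extremum in the positive region and shifting $h$ vertically via $c$ yields two transverse, hence simple, hence nondegenerate, roots.

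The main obstacle is twofold. In part~(1) it is the combinatorial forward direction: because the sign of $C_i$ at a $\lradot$ reactant is a free parameter, one must prove that, in the absence of a genuine $(\to,\la,\to)$ or $(\la,\to,\la)$ pattern, \emph{no} assignment of these free signs can create two sign changes, which demands a careful case analysis of the positions of $\lradot$ reactants relative to the fixed $\to$ and $\la$ reactants. In part~(3) the obstacle is the translation layer: matching the algebraic inequality $p_i p_j\, v_i v_j < 0$ together with $\lambda<0$ to exactly the four drawn zigzags, correctly isolating slope $-1$ as the degenerate boundary where $N$ loses its $t$-dependence, and---for $s>2$, where several pairs may interact---verifying that the existence of one admissible pair already suffices, via a direct construction that localizes the sign change of $h'$ to those two coordinates while keeping both roots simple and inside the positive compatibility class.
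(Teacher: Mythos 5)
Two framing points first: the paper never proves Proposition~\ref{prop:main-prior} --- it is imported wholesale from \cite[Theorems 3.6 and 5.2]{Joshi:Shiu:Multistationary} and used as a black box --- so your proposal can only be measured against that source and on its own merits. In outline you follow the same route as the original: reduce everything to counting roots of a univariate function along the one-dimensional stoichiometric subspace, identify nondegeneracy with simplicity of the root, and run Descartes-type sign arguments. Your part~(2) is complete and correct, and the necessity halves of parts~(1) and~(3) are set up correctly; in particular your observation that a steady state is nondegenerate iff $h'(t)\neq 0$ is right, since at a steady state $\nabla\varphi\cdot v=-\kappa_2\mathbf{x}^{\tilde y}\,h'(t)$, and your derivation that anti-parallel reaction vectors ($\lambda<0$) are forced is a point where your write-up is actually more careful than the transcription of part~(3) in this paper, whose zigzag pictures only visibly impose anti-parallelism of the \emph{projected} arrows.

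The two steps you defer as ``the main obstacle'' are, however, genuine gaps, and one of them dissolves under a better angle. For the forward direction of part~(1), no case analysis of where the $\lradot$ reactants sit is needed: if some rate choice yields two simple positive roots, Descartes forces two sign changes in $(C_1,\dots,C_m)$, say signs $+,-,+$ at reactants $a_i<a_j<a_k$; but $C_i>0$ forces the \emph{existence} of a reaction $a_iA\to bA$ with $b>a_i$ (otherwise every term in $C_i$ is negative), $C_j<0$ forces a down-reaction at $a_j$, and $C_k>0$ an up-reaction at $a_k$ --- and those three reactions already constitute a $2$-alternating subnetwork. For part~(3), the sufficiency direction carries the real content and remains unproven in your sketch: you must exhibit a class $\mathbf{x}^0$ and constants producing two \emph{simple} roots of $h$ inside $\{t:\mathbf{x}(t)>0\}$. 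Your plan (make $x_k^0$ huge for $k\neq i,j$ to localize to the pair $(i,j)$, then tune $c$) is the right idea, but you have not verified that the sign change of $N$ can be placed inside the valid $t$-interval, that $h$ attains values on both sides of its interior extremum within that interval (so that a generic vertical shift by $\log c$ gives two transverse roots), nor the multi-pair case --- in particular the configuration where \emph{every} admissible pair has diagonal slope $-1$, where the $t$-dependence must come from a third nonzero term $p_kv_k/x_k(t)$ rather than from $N$. One smaller slip: ``each summand is monotone in $t$'' is not the reason $h'$ has constant sign; the correct reason is that each summand $p_iv_i/x_i(t)$ carries the \emph{fixed sign} of $p_iv_i$ wherever $x_i(t)>0$.
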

Proposition~\ref{prop:main-prior} says that the classification of nondegenerately multistationary networks is already complete for networks with 1 species or 1 or 2 reactions.
Thus, in this work we tackle the next cases, those of 1 irreversible reaction and 1 reversible-reaction pair, or 2 reversible-reaction pairs -- 
under the assumption of only 2 species (Theorems~\ref{thm:main} and~\ref{thm:main2}).
These results, prior and new, on nondegenerate multistationarity for small networks are summarized
Table~\ref{tab:results}.

\subsection{Discriminants and configuration spaces of polynomials} \label{sec:discriminant}

Let $d \in \N$. We consider the \struc{\textit{configuration space}} of univariate  polynomials of degree at most $d$:
\begin{eqnarray*}
\struc{\C_d[z]} & := & \left\{ b_d z^d + b_{d-1} z^{d-1} + \dots + b_0 \mid b_0, b_1,\ldots,b_d \in \C\right\}.
\end{eqnarray*}
Every polynomial $f \in \C_d[z]$ is uniquely determined by its coefficient vector, so $\C_d[z]$ is isomorphic (as a vector space over $\mathbb{C}$) to $\C^{d+1}$.

Note that $\C_d[z]$ is a metric space induced by the Euclidean norm of the difference of the corresponding coefficient vectors. 
We denote this metric by \struc{$\dist(\cdot,\cdot)$}. For every $f \in \C_d[z]$, we define 
\begin{align*}
\struc{\cV(f)} ~:=~ \{v \in \C \mid f(v) = 0\}~.
\end{align*}

It is well-known that roots of univariate polynomials $f$ are continuous with respect to the coefficients of $f$; see e.g.\ \cite[Theorem 1.3.1, page 10]{Rahman:Schmeisser:AnalyticTheoryOfPolynomials}.

\begin{theorem}
The function $\struc{\cV} : \C_d[z] \to \Sym_d(\C)$, 
 given by $f \mapsto \cV(f)$, is continuous.
\label{Thm:RootsAreContinuousInCoefficients}
\end{theorem}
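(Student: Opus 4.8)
The plan is to prove continuity at a fixed $f_0 \in \C_d[z]$ by the argument principle, which tracks roots together with their multiplicities and is the standard route to continuous dependence of roots on coefficients. First I would reduce to the case $\deg f_0 = d$: on the open set where the leading coefficient $b_d$ is nonzero, $b_d$ is continuous and nonvanishing, so $f \mapsto f/b_d$ is a continuous map onto monic polynomials, and it suffices to prove continuity for the monic family (the degree-drop locus $\{b_d=0\}$ is the genuine difficulty, discussed at the end). Write the distinct roots of $f_0$ as $r_1,\dots,r_k$ with multiplicities $m_1,\dots,m_k$, so that $\sum_j m_j = d$.

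Next, given $\varepsilon>0$, I would choose it small enough that the closed disks $\overline{D}(r_j,\varepsilon)$ are pairwise disjoint, and set $K := \bigcup_j \partial D(r_j,\varepsilon)$. Since $f_0$ has no zero on the compact set $K$, the quantity $\delta := \min_{z\in K}\abs{f_0(z)}$ is strictly positive. The key elementary estimate is that closeness of coefficients forces uniform closeness on compacta: for $\abs{z}\le M$ one has $\abs{g(z)-f_0(z)} \le \dist(g,f_0)\cdot C(M,d)$ by Cauchy--Schwarz applied to $\sum_i (b_i-a_i)z^i$. Fixing $M$ large enough to contain every disk and then $\eta>0$ small enough that $\dist(g,f_0)<\eta$ implies $\abs{g-f_0}<\delta$ on $K$, the winding-number integral $\frac{1}{2\pi i}\oint_{\partial D(r_j,\varepsilon)} g'/g$ --- an integer-valued continuous function of $g$ once $g$ is zero-free on $K$ --- retains its value $m_j$. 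Equivalently, Rouch\'e's theorem gives that $g$ has exactly $m_j$ roots, counted with multiplicity, inside each $D(r_j,\varepsilon)$.

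Because $\sum_j m_j = d = \deg g$, these roots account for all of $\cV(g)$: every root of $g$ lies within $\varepsilon$ of some $r_j$, with matching multiplicities. In the natural matching (bottleneck) metric on $\Sym_d(\C) = \C^d/S_d$, this says precisely that $\dist(\cV(g),\cV(f_0)) \le \varepsilon$ whenever $\dist(g,f_0)<\eta$, which is continuity of $\cV$ at $f_0$. A more structural alternative avoids the estimates: the elementary-symmetric map $\C^d \to \C^d$, $(r_1,\dots,r_d)\mapsto(e_1,\dots,e_d)$, is continuous, proper and $S_d$-invariant, hence descends to a continuous proper bijection $\Sym_d(\C)\to\C^d$ whose inverse is automatically continuous (a proper continuous bijection of locally compact Hausdorff spaces is a homeomorphism); Vieta's formulas identify this inverse with $\cV$ on the monic family.

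I expect the main obstacle to be not the analysis on the exact-degree locus, which is routine, but pinning down the target space and the behavior at $\{b_d=0\}$. One must first fix the topology on $\Sym_d(\C)$ --- I would use the matching distance, under which convergence means the roots can be paired so that paired roots converge --- and check its compatibility with the quotient topology. More seriously, where $b_d\to 0$ the map sends roots off to infinity, so $\cV$ cannot be continuous into $\Sym_d(\C)$ there unless one restricts to the exact-degree locus or compactifies $\C$ to $\mathbb{P}^1(\C)$ and counts roots at infinity. Since the applications only require moving the simple or multiple finite roots of the fixed-degree polynomial $g(z)$ in Proposition~\ref{Prop:PolyTwoDistincPositiveRoots}, I would state and use the theorem on the exact-degree (or monic) locus, where the argument above is complete.
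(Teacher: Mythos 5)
Your proposal cannot be matched against a proof in the paper, because the paper does not prove Theorem~\ref{Thm:RootsAreContinuousInCoefficients} at all: it quotes the statement as well known and cites \cite[Theorem 1.3.1, page 10]{Rahman:Schmeisser:AnalyticTheoryOfPolynomials}. What you have written is, in essence, the standard proof of that cited result, and it is sound: the reduction to monic polynomials on the open locus $\{b_d \neq 0\}$, the coefficient-to-uniform estimate on compact sets, and the Rouch\'e/winding-number count giving exactly $m_j$ roots of the perturbed polynomial in each disk $D(r_j,\varepsilon)$, which exhausts all roots because $\sum_j m_j = d$ equals the degree. Your structural alternative --- the elementary symmetric map $\C^d \to \C^d$ is continuous, proper and $S_d$-invariant, hence descends to a homeomorphism $\Sym_d(\C) \to \C^d$ whose inverse is $\cV$ on monic polynomials --- is also correct and avoids the estimates entirely. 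Your caveat about the locus $\{b_d = 0\}$ is well taken and is, if anything, a point the paper glosses over: as literally stated on all of $\C_d[z]$, the map $\cV$ is not even well defined as an element of $\Sym_d(\C)$ when the degree drops (there are fewer than $d$ roots, and roots escape to infinity as $b_d \to 0$), and the cited theorem is indeed stated only for perturbations of a polynomial with nonvanishing leading coefficient. Restricting to the exact-degree locus, as you do, is consistent with how the theorem is actually used in the paper: in Lemma~\ref{Lem:ReduceMultiplicityOfRoots} and Proposition~\ref{Prop:PolyTwoDistincPositiveRoots} one only needs local control of the roots of the perturbed polynomials~\eqref{Equ:OurPolynomialDefinition} near points of the bounded interval $(0,T/\mu)$, which your disk-by-disk Rouch\'e argument supplies.
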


Following Gelfand, Kapranov, Zelevinsky \cite[Chapter 9]{Gelfand:Kapranov:Zelevinsky} we define the subset
\begin{eqnarray*}
\struc{\nabla_0} & := & \{f \in \C_d[z] \mid \text{ there exists } v \in \C \setminus \{0\} \text{ with } f(v) = f'(v) = 0\}~,
\end{eqnarray*}
and let $\struc{\nabla}$ denote its Zariski closure.
It is well known that $\struc{\nabla}$ is a hypersurface defined by a single polynomial~\cite{Gelfand:Kapranov:Zelevinsky}.

\begin{theorem}
For $d \geq 2$ the set $\nabla$ is a hypersurface in $\C_d[z]$, and there exists an irreducible, integral polynomial $\struc{\Delta} \in \mathbb{Z}[b_0,\ldots,b_d]$ such that $\cV(\Delta) = \nabla$, which is unique up to sign.
\label{Thm:DiscriminantHypersurface}
\end{theorem}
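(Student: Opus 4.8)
The plan is to realize $\nabla$ as the closure of the image of an incidence variety, run a standard dimension count to see it is an irreducible hypersurface, and then descend to integral coefficients. Introduce the incidence variety
\[
W ~:=~ \{ (f,v) \in \C_d[z] \times (\C \setminus \{0\}) \mid f(v) = f'(v) = 0 \},
\]
together with the two projections $\pi \colon W \to \C_d[z]$ and $q \colon W \to \C \setminus \{0\}$. By construction $\pi(W) = \nabla_0$, so $\nabla = \overline{\pi(W)}$, and the whole theorem reduces to understanding this image. First I would analyze $q$: for a fixed $v \neq 0$, the two conditions $f(v) = \sum_i b_i v^i = 0$ and $f'(v) = \sum_i i\, b_i v^{i-1} = 0$ are \emph{linear and homogeneous} in the coefficient vector $(b_0, \ldots, b_d)$, and they are linearly independent, since the $2 \times 2$ minor of their coefficient matrix in the columns indexed by $b_0, b_1$ is $\det\!\left(\begin{smallmatrix} 1 & v \\ 0 & 1 \end{smallmatrix}\right) = 1$. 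Hence each fiber $q^{-1}(v)$ is a linear subspace of $\C_d[z]$ of dimension $(d+1)-2 = d-1$, and $W$ is the total space of a rank-$(d-1)$ vector bundle over the irreducible base $\C \setminus \{0\}$. It follows that $W$ is irreducible of dimension $(d-1)+1 = d$; the hypothesis $d \geq 2$ enters here to guarantee $d-1 \geq 1$.

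Next I would pass to $\pi$. Since the image of an irreducible set under a morphism is irreducible and Zariski closure preserves irreducibility, $\nabla = \overline{\pi(W)}$ is irreducible. To see that $\nabla$ has codimension exactly one, view $\pi$ as a dominant morphism $W \to \nabla$ of irreducible varieties and apply the theorem on fiber dimensions. For any \emph{nonzero} $f \in \C_d[z]$, the fiber $\pi^{-1}(f) = \{ v \in \C\setminus\{0\} \mid f(v)=f'(v)=0\}$ is contained in the finite set of roots of $f$, hence is finite; thus the generic fiber of $\pi$ is $0$-dimensional. Therefore $\dim \nabla = \dim W - 0 = d$, which is codimension $1$ in $\C_d[z] \cong \C^{d+1}$. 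So $\nabla$ is an irreducible hypersurface.

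Finally I would extract the defining polynomial. Because $\C[b_0,\ldots,b_d]$ is a unique factorization domain, the prime ideal of the irreducible hypersurface $\nabla$ is principal, generated by an irreducible polynomial that is unique up to a nonzero scalar; by the Nullstellensatz its complex zero locus satisfies $\cV(\Delta) = \nabla$. It remains to choose $\Delta$ over $\Z$. The variety $\nabla$ is cut out by equations with integer coefficients, so it is defined over $\Q$, and the generator may be taken in $\Q[b_0,\ldots,b_d]$; clearing denominators and passing to the primitive part (content $1$) produces a generator in $\Z[b_0,\ldots,b_d]$, still irreducible by Gauss's Lemma and now unique up to sign. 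As an alternative that also supplies an explicit integral representative, one can take the appropriate irreducible factor of the resultant $\mathrm{Res}_z(f,f')$, but the incidence-variety argument is cleaner for establishing irreducibility.

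The step that I expect to demand the most care is the codimension count, namely confirming that the generic fiber of $\pi$ is finite so that $\nabla$ has codimension exactly one rather than more; the key simplifying observation is that $\pi^{-1}(f)$ always sits inside the root set of $f$, which makes the finiteness immediate for $f \neq 0$. Once that is in hand, the descent to $\Z$-coefficients and the uniqueness up to sign follow routinely from unique factorization and Gauss's Lemma.
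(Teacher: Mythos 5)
Your proof is correct, but note that the paper itself contains no proof of this statement: it is quoted as a known fact from Gelfand--Kapranov--Zelevinsky (Chapter 9 of the reference the paper cites), so there is no internal argument to compare against. What you have written is essentially the standard argument underlying that citation: the incidence variety $W$, realized as a rank-$(d-1)$ vector bundle over $\C \setminus \{0\}$ (hence irreducible of dimension $d$), projected to $\C_d[z]$ with generically finite fibers to obtain an irreducible codimension-one image, followed by the UFD and Gauss's Lemma descent to a primitive integral generator unique up to sign. Two points you should tighten, neither of which is a gap in strategy. First, the hypothesis $d \geq 2$ is not consumed where you say it is: the count $\dim W = d$ is valid even for $d = 1$. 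It is consumed in the inference ``thus the generic fiber of $\pi$ is $0$-dimensional,'' which requires the generic point of $\nabla$ to be a \emph{nonzero} polynomial; this holds because the fibers of $q$ have dimension $d-1 \geq 1$, so $\pi(W)$ contains nonzero polynomials and the irreducible variety $\nabla$ cannot equal the single point $\{0\}$. (For $d = 1$ one has $\pi(W) = \{0\}$ exactly and the theorem fails, so this is genuinely where the hypothesis does its work.) Second, the assertion that $\nabla$ is defined over $\Q$ deserves a sentence of justification: $W$ is cut out by polynomials with integer coefficients, elimination ideals of ideals generated over $\Q$ are again generated over $\Q$ (e.g.\ via a Gr\"obner basis computed over $\Q$), and then a gcd argument shows the principal generator of the prime ideal of $\nabla$ can be scaled to lie in $\Q[b_0,\ldots,b_d]$, after which your primitive-part and Gauss's Lemma steps finish the uniqueness up to sign. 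Both are routine completions of an otherwise sound proof.
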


The polynomial $\Delta$
is the \struc{\textit{discriminant}} for $\C_d[z]$. The set $\C_d[z] \setminus \nabla$ is a well-studied mathematical object with various applications, e.g., for knot theory or Morse functions~\cite{Vassiliev:Discriminants}. 

\section{Main results} \label{sec:results}
Our main results (Theorems~\ref{thm:main} and~\ref{thm:main2}) strengthen, in the case of 2 species, the classification of multistationary networks with one reversible-reaction pair and one irreversible reaction, or two reversible-reaction pairs (Theorem~\ref{prop:i=1-r=1}). 
Our results state that these multistationary networks are indeed nondegenerately multistationary, thereby lending support for the Nondegeneracy Conjecture (Conjecture~\ref{conj:nondeg}).

To state Theorem~\ref{thm:main}, we must introduce ``embedded'' networks, which generalize subnetworks.  A subnetwork $N$ is obtained from a reaction network $G$ by removing a subset of reactions (that is, setting some of the reaction rates to 0), while an embedded network is obtained by removing a subset of reactions and/or species.
For instance, removing the species $B$ from the reaction $A+B \to A+C$ yields the reaction $A \to A+C$.

\begin{definition}
The {\em \struc{restriction}} of a set of reactions $\RR$ to a set of species $\SS$, denoted by $\struc{\RR |_{\SS}}$, is the subset of $\RR$ remaining after (1) setting to 0 the stoichiometric coefficients of all species
not in $\SS$, and then (2) discarding any {\em \struc{trivial reactions}} (reactions of the form $\sum m_i A_i \to \sum m_i A_i$, i.e., when the source complex equals the product) and keeping only one copy of any duplicate reactions.
\end{definition}

\begin{definition} \label{def:emb}
The {\em \struc{embedded network}} $\struc{N}$ of a network $G := (\SS,\CC,\RR)$ 
 obtained by removing a set of reactions $\{ y \to y' \} \subseteq \RR$ and 
 a set of species  $\{X_i\} \subseteq \SS$ is
\[
\struc{N} \ := \ \left( \SS|_{\CC|_{\RR_N}},~ \CC|_{\RR_N}, \ \RR_N := \left(\RR \setminus  \{ y \to y' \}\right)|_{\SS \setminus \{X_i\}}\right)~,
\]
where
$ \CC|_{\RR_N}$ denotes the set of complexes of the set of reactions $\RR_N$, and 
$\SS|_{\CC|_{\RR_N}}$ denotes the set of species in the set of complexes 
$\CC|_{\RR_N}$.
\end{definition}

\begin{example} \label{ex:net-no-MSS}
Consider the network $G = \{2B \lra A+B~, ~ 2A+B \leftarrow 3A\}$.  
Its 1-species embedded networks are 
$\{0 \lra A~, ~ 2A \leftarrow 3A \}$ and 
$\{0 \to B \lra 2B\}$,
neither of which is 2-alternating.  Hence, by Theorem~\ref{prop:i=1-r=1},
network $G$ is not multistationary (and thus not nondegenerately multistationary).
\end{example}

\begin{example} \label{ex:net-MSS}
Recall the network $G = \{0 \lra A+B~, ~ 2A+B \to 3A+2B\}$
from Example~\ref{ex:main}.
The 1-species embedded network 
$\{0 \lra A~, ~ 2A \to 3A \}$ 
is 2-alternating (informally, it has the form ``$\lra \, \, \to$'').
Also, the reaction vectors are scalar multiples of each other.  
So, by Theorem~\ref{prop:i=1-r=1},
network $G$ is multistationary.
In fact, we see next that $G$ is nondegenerately multistationary (Theorem~\ref{thm:main}).  
No prior work 
yields this result (see Table~\ref{tab:results}).
\end{example}

\begin{theorem}[Classification of nondegenerately multistationary, 2-species networks with one reversible reaction and one irreversible reaction] \label{thm:main}
Let $G$ be a 2-species network that consists of one reversible-reaction pair $y \lra y'$
and one irreversible reaction $\widetilde y \to \widetilde y'$.  
Then the following statements are equivalent:
\begin{enumerate}[(1)]
\item $G$ is nondegenerately multistationary. 
\item the reaction vectors are (nontrivial) scalar multiples of each other: 
 $y'-y = \lambda (\widetilde y' - \widetilde y)$ for some $0 \neq \lambda \in \mathbb{R}$, and, for some species $i$, the embedded network of $G$ obtained by removing all species except $i$ is a \underline{2-alternating} network (``$\lra \, \, \to$''
or
 ``$ \leftarrow \, \, \lra$'').%
\end{enumerate}
\end{theorem}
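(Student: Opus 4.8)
The implication $(1)\Rightarrow(2)$ is immediate: a nondegenerately multistationary network is in particular multistationary (forget the word ``nondegenerate''), so Theorem~\ref{prop:i=1-r=1} delivers $(2)$. For the converse, observe that condition $(2)$ is \emph{verbatim} the multistationarity criterion of Theorem~\ref{prop:i=1-r=1}, so $(2)$ already guarantees that $G$ is multistationary; the whole task is to upgrade this to \emph{nondegenerate} multistationarity. First I would exploit the parallel-reaction-vector hypothesis: since $y'-y=\lambda(\widetilde y'-\widetilde y)$, the stoichiometric subspace $S$ is one-dimensional, spanned by a single vector $v$, and the mass-action field factors as $f_\kappa(\mathbf x)=\phi_\kappa(\mathbf x)\,v$ for the scalar polynomial $\phi_\kappa=\kappa_1\mathbf x^{y}-\kappa_2\mathbf x^{y'}+(\kappa_3/\lambda)\,\mathbf x^{\widetilde y}$. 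Positive steady states in a compatibility class are then exactly the positive zeros of $\phi_\kappa$ on that class.

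Because $S$ is one-dimensional, each class is cut out by the single conservation law $w\cdot\mathbf x=T$ with $w\perp v$; solving it parametrizes the class by one coordinate $z$, with the remaining concentration an affine function of $z$. Substituting converts $\phi_\kappa$ into a univariate polynomial $g(z)$ whose positive roots in the admissible interval are precisely the positive steady states of the class. The decisive dictionary is that \emph{nondegeneracy corresponds to simplicity of the root}: the Jacobian $df_\kappa(\mathbf x)=v\,(\nabla\phi_\kappa(\mathbf x))^{\!\top}$ is rank one, so its restriction to $S=\mathrm{span}(v)$ has image $S$ exactly when the derivative of $\phi_\kappa$ in the direction $v$, namely $\nabla\phi_\kappa\cdot v$, is nonzero; under the affine parametrization this directional derivative equals $g'(z^\ast)$ up to a nonzero constant. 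Hence a positive steady state is nondegenerate if and only if the corresponding positive root of $g$ has multiplicity one.

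It therefore suffices to produce, for some admissible parameters, a $g$ with two or more multiplicity-one positive roots. A short case analysis over the two alternating types ``$\lra\,\to$'' and ``$\la\,\lra$'' (together with the choice of which species to retain) shows that, after normalization, $g$ takes the three-term shape $(T-\mu z)^{n_2}-l z^{p_1}(T-\mu z)^{n_1}+m z^{p_2}$ of Proposition~\ref{Prop:PolyTwoDistincPositiveRoots}, with the inequalities $1\le p_1<p_2$ and $0\le n_1<n_2$ forced by the two-sign-change arrow diagram. Since $(2)$ makes $G$ multistationary, this $g$ has at least two positive roots for some $(T,l,m)\in\mathbb R^3_{>0}$, and Proposition~\ref{Prop:PolyTwoDistincPositiveRoots} perturbs $(T,l,m)$ so that $g$ acquires two or more multiplicity-one positive roots. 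As $(T,l,m)$ are free positive functions of the rate constants and the conserved total, the perturbation is realized by a genuine reaction system and compatibility class, and by the dictionary above this yields two or more nondegenerate positive steady states. Conceptually, Proposition~\ref{Prop:PolyTwoDistincPositiveRoots} asserts that the low-dimensional parametrized family $\{g\}$ is not trapped inside the discriminant hypersurface $\nabla$ of Theorem~\ref{Thm:DiscriminantHypersurface}, so that continuity of roots (Theorem~\ref{Thm:RootsAreContinuousInCoefficients}) preserves two positive roots while separating the coincident ones.

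The main obstacle is the normalization step. One must verify that \emph{every} network satisfying $(2)$ --- across both alternating types, either choice of retained species, and all admissible reactant placements, including reactants on a coordinate axis or at the origin as in Example~\ref{ex:net-MSS} --- yields a steady-state polynomial that either matches the canonical family $g$ with the stated exponent inequalities, or else carries enough coefficient freedom that its parametrized family manifestly escapes $\nabla$. Equally delicate is matching the admissible $z$-interval to genuine positive steady states (both coordinates strictly positive), so that the simple positive roots furnished by Proposition~\ref{Prop:PolyTwoDistincPositiveRoots} correspond to actual nondegenerate steady states and not to roots lying outside the compatibility class. The correspondence between multiplicity-one roots and nondegeneracy is the conceptual crux, while this configuration bookkeeping is the laborious one.
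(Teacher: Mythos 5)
Your $(1)\Rightarrow(2)$ direction and your dictionary (nondegenerate positive steady state $\Leftrightarrow$ multiplicity-one positive root of the univariate polynomial obtained by restricting to a compatibility class) are both correct, and they match what the paper does. The genuine gap is your normalization claim: it is \emph{not} true that every network satisfying $(2)$ reduces, after substitution, to the family $(T-\mu z)^{n_2}-l z^{p_1}(T-\mu z)^{n_1}+m z^{p_2}$ of Proposition~\ref{Prop:PolyTwoDistincPositiveRoots}. That family arises only in the configuration $y_B > y'_B \geq \widetilde y_B$ (both reactant $B$-coordinates weakly decreasing), where the conservation law gives $b = T-\mu a$ and positivity of $b$ forces the interval $(0,T/\mu)$. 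In the configuration $y_B < y'_B \leq \widetilde y_B$ the substitution is $b = T+\mu a$, and the resulting polynomial is $1 - l\,a^{p_1}(T+\mu a)^{n_1} + m\,a^{p_2}(T+\mu a)^{n_2}$: the binomial factors sit on the \emph{second and third} terms and the admissible interval is all of $(0,\infty)$. In the ``zigzag'' configurations $y_B > y'_B < \widetilde y_B$ and $y_B < y'_B > \widetilde y_B$, the binomial factors sit on the \emph{first and third} terms. Neither of these is of the form treated by Proposition~\ref{Prop:PolyTwoDistincPositiveRoots}, and its proof (the sign argument culminating in~\eqref{eq:difference}, via Lemma~\ref{Lem:ReduceMultiplicityOfRoots}) genuinely uses where the binomial factors sit and the interval $(0,T/\mu)$; so citing it in those cases is not bookkeeping --- it is unavailable, and your hedge that the family ``manifestly escapes $\nabla$'' is not an argument.

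The paper closes the remaining cases with two different tools, and that is the content your proposal is missing. For $y_B< y'_B\le\widetilde y_B$ one may choose $T=0$ (this keeps $b=\mu a>0$, which is exactly what fails in the decreasing case), collapsing the polynomial to the trinomial $1-l\,a^{p_1+n_1}+m\,a^{p_2+n_2}$ of~\eqref{eq:g-case-2}; the converse of Descartes' rule of signs then supplies two simple positive roots directly, with no perturbation argument at all. For the zigzag configurations, a two-reaction subnetwork of $G$ already has one of the box diagrams of Proposition~\ref{prop:main-prior}(3), hence is nondegenerately multistationary, and Lemma~\ref{lem:lift} lifts this to $G$ (the stoichiometric subspaces agree since the reaction vectors are parallel); the single exception, marked diagonal of slope $-1$ (excluded in Proposition~\ref{prop:main-prior}(3)), is handled by a direct intermediate-value argument on~\eqref{eq:g_case-1}. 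So the proof of $(2)\Rightarrow(1)$ requires three structurally distinct arguments, of which your proposal supplies only one.
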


Theorem~\ref{thm:main}, which we prove in Section~\ref{sec:proof}, yields the following result:

\begin{theorem}[Classification of nondegenerately multistationary, 2-species networks with two reversible-reaction pairs] \label{thm:main2}
Let $G$ be a 2-species network 
that consists of two reversible-reaction pairs, $y \lra y'$ and $\widetilde y \lra \widetilde y'$.  
Then the following statements are equivalent:
\begin{enumerate}[(1)]
\item $G$ is nondegenerately multistationary. 
\item the reaction vectors are (nontrivial) scalar multiples of each other: 
 $y'-y =  \lambda (\widetilde y' - \widetilde y)$ for some $0 \neq \lambda \in \mathbb{R}$, and, 
 for some species $i$, the embedded network of $G$ obtained by removing all species except $i$ is a \underline{3-alternating} network (``$\lra \, \, \lra$'').%
\end{enumerate}
\end{theorem}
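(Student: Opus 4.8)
The plan is to deduce Theorem~\ref{thm:main2} from Theorem~\ref{thm:main}, using the lifting result (Lemma~\ref{lem:lift}) and the prior multistationarity classification (Theorem~\ref{prop:i=1-r=1}). The implication $(1)\Rightarrow(2)$ is essentially free: a nondegenerate positive steady state is in particular a positive steady state, so nondegenerate multistationarity implies multistationarity, and then Case~2 of Theorem~\ref{prop:i=1-r=1} applies verbatim, giving exactly the two conditions of statement~(2). The content is therefore entirely in $(2)\Rightarrow(1)$.

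For $(2)\Rightarrow(1)$ I would first analyze the combinatorics of the 3-alternating embedded network. Such a network has four reactions and four distinct reactant coefficients $a_1 < a_2 < a_3 < a_4$, and its arrow diagram is forced to be $(\to,\la,\to,\la)$ (the smallest reactant must point up and the largest down). These four reactions come from the two embedded reversible pairs, each contributing a $\to$ at its smaller reactant and a $\la$ at its larger one. A short case check shows that the only configuration yielding the alternating pattern is the \emph{separated} one, in which one pair embeds to $a_1 A \lra a_2 A$ and the other to $a_3 A \lra a_4 A$; a nested or crossing configuration produces $(\to,\to,\la,\la)$ instead.

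Given this, I would pass to the subnetwork $N$ of $G$ obtained by retaining the lower reversible pair in full and keeping only the upward direction of the upper pair, namely the reaction whose $i$-coordinate reactant is $a_3$. Deleting this single direction commutes with embedding into species $i$ (the retained reactants remain distinct), so the embedded network of $N$ has reactants $a_1 < a_2 < a_3$ and arrow diagram $(\to,\la,\to)$, i.e.\ it is 2-alternating of type ``$\lra \, \to$''. Since $N$ consists of one reversible pair and one irreversible reaction, and its reaction vectors inherit the scalar-multiple relation from $G$, the network $N$ satisfies condition~(2) of Theorem~\ref{thm:main} and is therefore nondegenerately multistationary. Finally, condition~(1) forces the stoichiometric subspace of $G$ to be the one-dimensional span of $y'-y$; because $N$ still contains the reversible pair $y \lra y'$, it has the same stoichiometric subspace as $G$, so Lemma~\ref{lem:lift} lifts the two nondegenerate positive steady states of $N$ to $G$.

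I expect the main obstacle to be the combinatorial step in the second paragraph: one must verify carefully that 3-alternating forces the separated-interval configuration, and that deleting exactly one direction of one reversible pair lands in the 2-alternating case actually covered by Theorem~\ref{thm:main} (rather than some degenerate arrangement with coinciding reactants). Once this is pinned down, the remaining invocations of Theorem~\ref{thm:main} and Lemma~\ref{lem:lift} are routine, provided one checks that the stoichiometric subspaces of $N$ and $G$ agree so that the lifting lemma is applicable.
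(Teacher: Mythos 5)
Your proposal is correct and takes essentially the same route as the paper's proof: $(1)\Rightarrow(2)$ is immediate from Theorem~\ref{prop:i=1-r=1}, and $(2)\Rightarrow(1)$ proceeds by extracting from the 3-alternating structure a subnetwork $N$ (one full reversible pair plus one direction of the other) whose 1-species embedded network is 2-alternating of type ``$\lra \, \to$'', applying Theorem~\ref{thm:main} to conclude $N$ is nondegenerately multistationary, and lifting to $G$ via Lemma~\ref{lem:lift} since both have the one-dimensional stoichiometric subspace spanned by $y'-y$. Your additional combinatorial check that 3-alternating forces the separated configuration is detail the paper leaves implicit, and the only blemish is a typo: in your final paragraph, ``condition~(1)'' should read ``condition~(2)''.
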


\begin{proof}
First, (1) $\Rightarrow$ (2) follows immediately from Theorem~\ref{prop:i=1-r=1}.  As for the converse,
(2) says that $G$ has a one-dimensional stoichiometric subspace and has a 1-species embedded network that is 3-alternating  (``$\lra \, \, \lra$''), 
which therefore has a 2-alternating subnetwork of the form ``$\lra \, \, \to$'' (and in fact also has one of the form  ``$ \leftarrow \, \, \lra$'').  
Thus, by Theorem~\ref{thm:main}, the corresponding subnetwork $N$ of $G$ is nondegenerately multistationary.  So, by Lemma~\ref{lem:lift}, $G$ too is nondegenerately multistationary.
\end{proof}

\section{Proof of the main result} \label{sec:proof}
The main technical piece for proving Theorem~\ref{thm:main} is the following proposition:
\begin{proposition} \label{Prop:PolyTwoDistincPositiveRoots}
Fix $\mu>0$ and integers $p_1,p_2,n_1,n_2$ for which $1 \leq p_1 < p_2$ and $0 \leq n_1 < n_2$, and 
consider the following polynomial:
 \begin{eqnarray} 
	g(z) & := & (T-\mu z)^{n_2} - l z^{p_1} (T- \mu z)^{n_1} + m z^{p_2}~. \label{Equ:OurPolynomialDefinition}
 \end{eqnarray}
Assume that there exists $(T,l,m) \in \mathbb{R}^3_{>0}$ for which $g(z)$ admits two or more {\em distinct} real roots
in the interval $(0, T/\mu)$.  
Then there exists $(\widetilde T,\widetilde l,\widetilde m) \in \mathbb{R}^3_{>0}$ yielding a polynomial $\widetilde{g}$ of the form \eqref{Equ:OurPolynomialDefinition} that admits two or more (distinct) {\em multiplicity-one} roots in $(0, \widetilde T/\mu)$.  
\end{proposition}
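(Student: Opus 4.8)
The plan is to keep $T$ and $l$ fixed and to perturb only the single parameter $m$, so write $g(z) = h(z) + m z^{p_2}$, where $h(z) := (T-\mu z)^{n_2} - l z^{p_1}(T-\mu z)^{n_1}$ collects the part independent of $m$. Replacing $m$ by $m+\delta$ keeps the polynomial in the family~\eqref{Equ:OurPolynomialDefinition} and admissible ($m+\delta>0$, same interval) once $|\delta|$ is small. The first point I would record is that, along this line, the \emph{degenerate} locus is finite: if the parameter $m'$ makes some $z\neq 0$ a root of multiplicity $\geq 2$, then eliminating $m'$ from $h(z)+m'z^{p_2}=0$ and $h'(z)+m'p_2 z^{p_2-1}=0$ yields the single $m'$-free equation $z h'(z) - p_2 h(z) = 0$. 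Since $h(0)=T^{n_2}\neq 0$, the left-hand side is a nonzero polynomial, hence has finitely many roots $z$, each pinning down one value of $m'$; so only finitely many $m'$ give a multiple root, and $g_{m+\delta}$ is squarefree for all but finitely many $\delta$. This already defuses the worry flagged before the statement: despite the rigid relations among the coefficients, the bad set on the admissible $m$-line is finite.

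Now list the hypothesized roots $r_1<\dots<r_k$ (with $k\geq 2$) in $(0,T/\mu)$ and let $\nu_i$ be their multiplicities, and split into two exhaustive cases. Suppose first that \emph{every} $\nu_i$ is odd; then there are at least two sign changes of $g$. Choosing points just outside small disjoint neighborhoods of two of these roots, $g$ takes opposite signs across each, and for $|\delta|$ small $g_{m+\delta}$ keeps those signs, so the intermediate value theorem gives a real root of $g_{m+\delta}$ near each. Taking $\delta$ outside the finite degenerate set above makes $g_{m+\delta}$ squarefree, so these two real roots are simple; they lie in $(0,T/\mu)$ and are distinct, and we are done.

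Otherwise \emph{some} $\nu_i$ is even, a tangency, and here the intermediate value theorem gives no purchase, so I argue locally. Near $r_i$ one has $g(z)=c_i(z-r_i)^{\nu_i}+O((z-r_i)^{\nu_i+1})$ with $c_i\neq 0$, while $g_{m+\delta}(r_i)=\delta r_i^{p_2}$ and $r_i^{p_2}>0$. Choosing $\operatorname{sign}(\delta)=-\operatorname{sign}(c_i)$ shifts the even-power graph through the axis: the leading model $c_i(z-r_i)^{\nu_i}+\delta r_i^{p_2}=0$ forces $(z-r_i)^{\nu_i}\approx -\delta r_i^{p_2}/c_i>0$, which has two real solutions, each a transversal crossing and hence a simple root of $g_{m+\delta}$. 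For small $\delta$ these lie in $(0,T/\mu)$, so $g_{m+\delta}$ again has the required pair of simple roots — note we need only \emph{one} even-multiplicity root. In every case $(\widetilde T,\widetilde l,\widetilde m)=(T,l,m+\delta)$ works.

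The step I expect to demand the most care is exactly this tangency case: certifying that an even-multiplicity root splits into \emph{two} simple real roots under a perturbation that respects the one-parameter family, keeps $m+\delta>0$, and does not let the new roots leave the open interval or collide with the images of the other roots. The genuine content is the sign bookkeeping in the leading model together with the verification that the two produced roots are simple for the full polynomial and not merely for its truncation; for this I would expand to one further order and check that the unique nearby critical point of $g_{m+\delta}$ has nonzero value, or invoke continuity of the roots in the coefficients (Theorem~\ref{Thm:RootsAreContinuousInCoefficients}) to track the $\nu_i$ roots emanating from $r_i$ and isolate the two real simple ones. The remaining genericity and positivity requirements are all met by taking $|\delta|$ small and avoiding the finite degenerate set.
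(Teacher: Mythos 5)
Your proof is correct, but it follows a genuinely different route from the paper's. The paper perturbs $l$ and $m$ \emph{jointly} along the one-parameter family $\widetilde{l} = l + \lambda b^{p_2}$, $\widetilde{m} = m + \lambda(T-b)^{n_1}b^{p_1}$, which keeps a chosen root $b$ exactly fixed, and then shows by an explicit computation (equating $g(b)=g'(b)$ and subtracting the perturbed version) that keeping $b$ degenerate for all small $\lambda$ forces the sign contradiction $(p_1-p_2)(T-b) = b\,n_1$; this is its Lemma~\ref{Lem:ReduceMultiplicityOfRoots}, after which the Proposition follows from a three-case analysis on multiplicities, with Cases 2 and 3 recursing back to Case 1 via continuity of roots (Theorem~\ref{Thm:RootsAreContinuousInCoefficients}). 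You instead perturb $m$ alone, and your elimination step --- a nonzero multiple root of $h(z)+m'z^{p_2}$ forces $zh'(z)-p_2h(z)=0$, a polynomial with nonzero constant term $-p_2T^{n_2}$, so only finitely many $m'$ are degenerate --- kills in one stroke the central worry that the constrained family might lie inside the discriminant locus $\nabla$; your case split (all roots of odd multiplicity, versus at least one even-multiplicity root split into two real roots by a sign-chosen $\delta$) then closes the argument with only the intermediate value theorem, with no recursion between cases. What your approach buys is economy: a single perturbation direction, a two-case analysis, and simplicity of the produced roots for free from genericity; what the paper's approach buys is the stronger local statement that the root $b$ itself can be made simple without moving it, which is structurally informative about the parameter space. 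One caveat: your claim in the tangency case that the leading model $c_i(z-r_i)^{\nu_i}+\delta r_i^{p_2}$ yields ``transversal crossings'' is only heuristic as stated --- the truncation does not by itself certify simplicity for the full polynomial --- but this is harmless, since the two roots are rigorously produced by the sign/IVT argument on $(r_i-\eta,r_i)$ and $(r_i,r_i+\eta)$, and their simplicity follows from choosing $\delta$ outside your finite degenerate set, exactly as you note in your closing paragraph.
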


Before proving Proposition~\ref{Prop:PolyTwoDistincPositiveRoots}, we recall why it is nontrivial. 
Given Theorem~\ref{Thm:DiscriminantHypersurface}, the proposition would be trivial if we were instead considering a general polynomial in $\C_d[z]$ (where 
$d := \max\{n_2,~n_1 + p_1,~p_2\}$, 
as the zero set $\nabla$ of the discriminant is codimension-one in this space). However, we are considering only a 
three-dimensional subset of $\R_{d}[z]:=\left\{ b_d z^d + b_{d-1} z^{d-1} + \dots + b_0 \mid b_0, b_1,\ldots,b_d \in \R \right\}$, 
arising from~\eqref{Equ:OurPolynomialDefinition},
which {\em a priori} could be contained in $\nabla$. 

As a first step towards a proof of Proposition \ref{Prop:PolyTwoDistincPositiveRoots} we show the following lemma.

\begin{lemma}
Let the notation be as in Proposition \ref{Prop:PolyTwoDistincPositiveRoots}. Assume that $b$
 is a root of $g$ in the interval $(0, T/\mu)$. Then for all 
$\epsi > 0$, there exists a polynomial $\widetilde{g}$ of the form \eqref{Equ:OurPolynomialDefinition} with parameters $(\widetilde{T},\widetilde{l},\widetilde{m}) \in \mathbb{R}^3_{>0}$ such that 
\begin{itemize}
\item $\dist(g,\widetilde g) < \epsi$ and 
\item $\widetilde{g}(b) = 0$, 
and $b$ has multiplicity one.
\end{itemize}
\label{Lem:ReduceMultiplicityOfRoots}
\end{lemma}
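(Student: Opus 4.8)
The plan is to perturb only the two parameters $l$ and $m$, holding $T$ (and hence both the interval $(0,T/\mu)$ and the point $b$) fixed, and to move along the unique line in the $(l,m)$-plane that preserves the root at $b$. Writing $u := T - \mu b$, with $u > 0$ because $b \in (0, T/\mu)$, the root condition $g(b) = 0$ reads
\[
u^{n_2} - l\, b^{p_1} u^{n_1} + m\, b^{p_2} = 0,
\]
which is \emph{affine} in $(l,m)$. Consequently, displacing $(l,m)$ in the direction $(\delta l,\delta m) = (b^{p_2},\, b^{p_1} u^{n_1})$ keeps $g(b)=0$ \emph{exactly}, for every displacement, not merely to first order. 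Setting $\widetilde T := T$, $\widetilde l := l + t\, b^{p_2}$, and $\widetilde m := m + t\, b^{p_1} u^{n_1}$ thus produces, for each $t \in \mathbb{R}$, a polynomial $\widetilde g$ of the form~\eqref{Equ:OurPolynomialDefinition} with $\widetilde g(b) = 0$. Since $b>0$ and $u>0$, the displacement direction is nonzero with nonnegative entries, so for all sufficiently small $|t|$ the parameters $(\widetilde T,\widetilde l,\widetilde m)$ remain in $\mathbb{R}^3_{>0}$ and $\dist(g,\widetilde g) \to 0$ as $t \to 0$, giving $\dist(g,\widetilde g) < \epsi$.

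The next step is to track $\widetilde g'(b)$ along this line. Differentiating~\eqref{Equ:OurPolynomialDefinition} and evaluating at $z=b$ exhibits $\widetilde g'(b)$ as an affine function of $(\widetilde l,\widetilde m)$, hence an affine function of $t$; a direct computation shows that its slope (the coefficient of $t$) equals
\[
(p_2 - p_1)\, b^{p_1+p_2-1} u^{n_1} + \mu\, n_1\, b^{p_1+p_2} u^{n_1-1} = b^{p_1+p_2-1} u^{n_1-1}\left[(p_2 - p_1)\, u + \mu\, n_1\, b\right].
\]
The decisive observation is that this slope is \emph{strictly positive}: we have $b>0$, $u>0$ (so $u^{n_1-1}>0$ regardless of the integer $n_1$), $p_2 - p_1 > 0$ from $p_1 < p_2$, and $\mu, n_1 \ge 0$, so the bracket is positive and the prefactor is positive. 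Hence $\widetilde g'(b)$ is a strictly monotonic, in particular nonconstant, affine function of $t$.

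To finish: if $g'(b)\neq 0$ already, take $\widetilde g = g$, i.e.\ $t=0$. Otherwise $g'(b)=0$, and by strict monotonicity $\widetilde g'(b)\neq 0$ for every $t\neq 0$; choosing any $t\neq 0$ with $|t|$ small enough to guarantee positivity of the parameters and $\dist(g,\widetilde g)<\epsi$ yields the desired $\widetilde g$, for which $b$ satisfies $\widetilde g(b)=0$ and $\widetilde g'(b)\neq 0$, hence is a root of multiplicity one.

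I expect the only genuine obstacle to lie in the sign computation of the second step: \emph{a priori} the slope of $\widetilde g'(b)$ could vanish, in which case this one-parameter move would fail and one would be forced to also vary $T$ (making the root condition nonlinear). The content of the lemma is precisely that the three structural inequalities $p_1 < p_2$, $b > 0$, and $u > 0$ force a definite positive sign, so that perturbing $l$ and $m$ alone already suffices to break the degeneracy of $g$ at $b$.
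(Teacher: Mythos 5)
Your proposal is correct and is essentially the paper's own argument: you move along the same line $(\widetilde l,\widetilde m)=\bigl(l+t\,b^{p_2},\,m+t\,b^{p_1}(T-\mu b)^{n_1}\bigr)$ that exactly preserves the root at $b$, and your strictly positive slope $b^{p_1+p_2-1}u^{n_1-1}\bigl[(p_2-p_1)u+\mu n_1 b\bigr]$ is precisely the quantity behind the paper's contradiction $(p_1-p_2)(T-b)=b\,n_1$ (negative left side versus nonnegative right side). The only differences are presentational: the paper first normalizes $\mu=1$ and argues by contradiction, whereas you keep $\mu$ general and phrase the same computation directly as strict monotonicity of the affine map $t\mapsto\widetilde g'(b)$.
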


\begin{proof}
Let $g$ be as in Proposition \ref{Prop:PolyTwoDistincPositiveRoots}, with parameters $(T,m,l)$. First, we claim that we can assume $\mu = 1$. Indeed,
if $1 \neq \mu \in \R_{> 0}$, then consider the isomorphism $\R \to \R$, 
given by $z \mapsto z / \mu$, and replace $l$ by $l \cdot \mu^{p_1}$ and $m$ by $m \cdot \mu^{p_2}$. We can carry out this replacement, because we are only interested in the multiplicity of roots, and thereby obtain an equivalent $\mu=1$ version of $g$.

We rearrange $g(z)$ as follows:
\begin{equation} \label{eq:rearrange}
 {g}(z) \ = \ (T- z)^{n_1}
	\left[(T - z)^{n_2-n_1} - (l+z^{p_2}) z^{p_1} \right] 
	+ (m + (T- z)^{n_1} z^{p_1}) z^{p_2}~.
\end{equation}
Assume that $b \in (0,T/\mu) = (0,T)$  is a root of $g$. It is straightforward to check from~\eqref{eq:rearrange} that
there exists a one-dimensional subspace of polynomials of the form~\eqref{Equ:OurPolynomialDefinition}
with the same root $b$; namely, these polynomials are defined by the parameters
$(T, \widetilde{l},\widetilde{m})$, where:
\begin{eqnarray}\label{Equ:LinearInvariantSubspace}
 \widetilde{l} ~:=~ l+\lam b^{p_2} 
 \quad \quad {\rm and} \quad \quad
 \widetilde{m} ~:=~m + \lam (T- b)^{n_1} b^{p_1}~,
\end{eqnarray}
for any choice of  $\lam \in \R$.

Fix $\epsi >0$.  
To complete the proof, it suffices to show that there exists 
$\lam > 0$ such that for the polynomial $\widetilde{g}(z)$ given by the induced parameters $(T, \widetilde{l},\widetilde{m})$,
as in~\eqref{Equ:LinearInvariantSubspace},
 it holds that $\dist(g,\tilde{g}) < \epsi$ and $b$ is a root of multiplicity one for $\widetilde{g}$. Hence, for the rest of the proof, we assume (for contradiction) that no such $\lam$ exists.

In particular, 
for $\lambda$ sufficiently small, 
$b$ is a multiple root of $g$ in~\eqref{Equ:OurPolynomialDefinition},
where $(T, \widetilde{l},\widetilde{m})$ are as in~\eqref{Equ:LinearInvariantSubspace}.
Thus, $g \in \nabla$ and $g(b) = g'(b)$. We compute, using~\eqref{Equ:OurPolynomialDefinition}:
\begin{eqnarray*}
 g'(b) & = & 
 	-n_2  (T- b)^{n_2 - 1} - \left(p_1 l b^{p_1-1} (T- b)^{n_1} - l b^{p_1} n_1 (T- b)^{n_1 - 1}\right) + p_2 m b^{p_2 -1} \\
 & = &  (T- b)^{n_1 - 1}
 	\left[-n_2 (T- b)^{n_2 - n_1} - l\left(p_1 b^{p_1-1} (T- b) - b^{p_1} n_1  \right) \right] + p_2 m b^{p_2 -1}~.
\end{eqnarray*}
Hence, $g(b) = g'(b)$ is equivalent to:
\begin{eqnarray*}
& & (T- b)^{n_1}\left( (T -  b)^{n_2-n_1} - l b^{p_1}\right) + m b^{p_2} \\
 & = & (T- b)^{n_1 - 1}\left[ -n_2  (T- b)^{n_2 - n_1} - l(p_1 b^{p_1-1} (T- b) - b^{p_1}  n_1)\right] + p_2 m b^{p_2 -1} ~.
\end{eqnarray*}
We rearrange this equation to obtain:
 \begin{eqnarray*}
 & & (T- b)\left((T -  b)^{n_2-n_1} - l b^{p_1}\right) + \frac{m b^{p_2-1}(b - p_2)}{(T- b)^{n_1 - 1}} 
 \\
 & = & -n_2 (T- b)^{n_2 - n_1} - l\left(p_1 b^{p_1-1} (T- b) - b^{p_1}  n_1\right)~,
 \end{eqnarray*}
 and then arrange so that only terms involving $l$ or $m$ appear on the left-hand side:
 \begin{eqnarray} \label{eq:l-m} 
& &  l b^{p_1}\left(\left(\frac{p_1}{b} - 1\right)(T- b) -  n_1 \right) + \frac{m b^{p_2-1}(b - p_2)}{(T- b)^{n_1 - 1}} 
\\ \notag
 & = & 
 	(-n_2  - (T- b))(T- b)^{n_2 - n_1}~.
\end{eqnarray}

Equation~\eqref{eq:l-m} holds equally well when $l$ and $m$ are replaced by, respectively, $\widetilde l$ and $\widetilde m$ as in~\eqref{Equ:LinearInvariantSubspace}, for sufficiently small $\lambda \neq 0$ (because
we have assumed that
 $b$ is a {\em multiple} root of the polynomial~\eqref{Equ:OurPolynomialDefinition} given by $(T, \widetilde{l},\widetilde{m})$).
 Subtracting equation~\eqref{eq:l-m} from the version of equation~\eqref{eq:l-m} obtained by replacing $l$ and $m$ by, respectively, $\widetilde l$ and $\widetilde m$ as in~\eqref{Equ:LinearInvariantSubspace} --- the resulting right-hand side is 0 because the right-hand side of~\eqref{eq:l-m} does not depend on $l$ or $m$ --- we obtain:
\begin{eqnarray}
 \lam b^{p_2} b^{p_1}\left(\left(\frac{p_1}{b} - 1\right)(T- b) -  n_1 \right) & + & \frac{\lam(T- b)^{n_1}b^{p_1} b^{p_2-1}(b - p_2)}{(T- b)^{n_1 - 1}}  \ = \ 0~.
 	 \nonumber 
\end{eqnarray}
It is straightforward to simplify this equation (after dividing by $\lambda b^{p_1}b^{p_2}$) to obtain:
\begin{align} \label{eq:difference}
	(p_1 - p_2 )(T- b) ~ = ~ b  n_1 ~.
\end{align}
We have reached a contradiction: 
the left-hand side of equation~\eqref{eq:difference} is negative
 (because $p_1 < p_2$ and $T- b>0$), while
 the right-hand side is non-negative (as $b>0$ and $n_1\geq 0$).
 This contradiction holds for all choices of $\lam \neq 0$, and so completes the proof.
\end{proof}

\begin{proof}[Proof of Proposition \ref{Prop:PolyTwoDistincPositiveRoots}]
Consider a polynomial $g$ as given in the proposition. By assumption, $g$ has at least two positive real roots $a_1$ and $a_2$
in the interval $(0,T/\mu)$.
We can assume that at least one root has multiplicity at least two, as otherwise nothing is left to show.
We distinguish several cases.  

\textbf{Case 1:} $a_1$ has multiplicity at least two, and $a_2$ has multiplicity one. Define
\begin{align*}
\delta ~:=~ \min  \left\{ 
	a_1,~a_2,~
	\frac{1}{2} \dist \left(a_1,T/\mu \right),~
	\frac{1}{2} \dist \left(a_2,T/\mu \right),~
	\frac{1}{2} \dist(a_1,a_2)	
	\right\} ~.
\end{align*}
We apply Lemma \ref{Lem:ReduceMultiplicityOfRoots} with respect to $a_1$ and a sufficiently small $\epsi > 0$. We obtain a new polynomial $\widetilde g$ of the form \eqref{Equ:OurPolynomialDefinition} such that $\widetilde g(a_1) = 0$ and $a_1$ has multiplicity one. 
Roots of polynomials are continuous in their coefficients, by Theorem \ref{Thm:RootsAreContinuousInCoefficients}, so we know that every root of $\widetilde{g}$ is in a $\delta$-neighborhood of a root of $g$
(by choosing $\epsi$ sufficiently small).  
Since $\widetilde{g}$ is real, and non-real roots of real polynomials appear in 
complex-conjugate pairs, and $a_2$ is an isolated real root of $g$, there 
must exist an isolated \textit{real} root $\widetilde a_2$ of $\widetilde{g}$ in a $\delta$-neighborhood of $a_2$. 
Finally, we
require that $\epsi < \min \left\{ \frac{1}{2} \dist \left(a_1,T/\mu \right),~\frac{1}{2} \dist \left(a_2,T/\mu \right)\right\}$, so that by construction,
$a_1$ and $\widetilde a_2$ are distinct multiplicity-one roots of $\widetilde g$ 
in the interval $(0,\widetilde T/\mu)$.  

\textbf{Case 2:} Both roots $a_1$ and $a_2$ have multiplicity at least two, 
and one of the roots, say $a_1$, has even multiplicity. We apply Lemma \ref{Lem:ReduceMultiplicityOfRoots} with respect to $a_1$ and a sufficiently small $\epsi > 0$. We obtain a new polynomial $\widetilde g$ of the form \eqref{Equ:OurPolynomialDefinition} such that $\widetilde g(a_1) = 0$ and $a_1$ has multiplicity one. Since $\widetilde g$ is a real polynomial and thus its non-real roots appear in complex-conjugate pairs, $\widetilde{g}$ has another positive, real root $a_3$ in a $\delta$-neighborhood of $a_1$, and $a_3$ has odd multiplicity due to Theorem \ref{Thm:RootsAreContinuousInCoefficients}. If $a_3$ has multiplicity one, then we are done. Otherwise, 
$\widetilde g$ has a root $a_3$ of multiplicity at least two and a root
$a_1$ of multiplicity one, so we are thus reduced to Case 1.

\textbf{Case 3:} Both roots $a_1$ and $a_2$ have odd multiplicity at least three. 
We apply Lemma \ref{Lem:ReduceMultiplicityOfRoots} with respect to $a_1$ and a sufficiently small $\epsi > 0$. We obtain a new polynomial $\widetilde g$ of the form \eqref{Equ:OurPolynomialDefinition} such that $\widetilde g(a_1) = 0$ and $a_1$ has multiplicity one. Since $a_2$ has odd multiplicity and $\widetilde{g}$ is real, $\widetilde{g}$ has a positive real root $\widetilde{a_2}$ in a $\delta$-neighborhood of $a_2$. If $\widetilde{a_2}$ has multiplicity one, then we are done. Otherwise, 
$\widetilde g$ has a root $\widetilde a_2$ of multiplicity at least two and a root
$a_1$ of multiplicity one, so we are again reduced to Case 1.
\end{proof}

We can now prove Theorem~\ref{thm:main}.
\begin{proof}[Proof of Theorem~\ref{thm:main}]
In light of Proposition~\ref{prop:main-prior}, 
what we must prove is that for any 2-species network $G$ consisting of one reversible-reaction pair $y \lra y'$ and one irreversible reaction $\widetilde y \to \widetilde y'$, 
if $G$ is multistationary, then it is in fact nondegenerately multistationary.  Accordingly, let $G$ be such a network, and denote its species by $A$ and $B$.  We know by Proposition~\ref{prop:main-prior} that $y'-y = \lambda (\widetilde y' - \widetilde y)$ for some $0 \neq \lambda \in \mathbb{R}$, and also that 
 the embedded network of $G$ obtained by removing one of the species,
which without loss of generality we assume is species $B$,
 is a 2-alternating network (``$\lra \, \, \to$''
or
 ``$ \leftarrow \, \, \lra$'').
Thus, after switching $y$ and $y'$ if necessary (so that $y_A < y_A'$), we have that either
	\begin{align} \label{eq:arrow-cases}
	y_A < y_A'  < \widetilde y_A \quad \quad {\rm or} \quad \quad 
	\widetilde y_A < y_A < y_A'~,
	\end{align}
for, respectively, the ``$\lra \, \, \to$'' case 
or
the  ``$ \leftarrow \, \, \lra$'' case. 

Each of these 2 cases breaks further into 6 subcases, based on:
	\begin{enumerate}
	\item whether the slope of the reaction vectors is positive (that is, $y_B < y_B'$) or negative ($y_B > y_B'$), and 
	\item whether 
		$y_B'< \widetilde y_B$, or
		$y_B'= \widetilde y_B$, or
		$y_B'> \widetilde y_B$; these three subcases correspond to when the boxes in the box diagrams look, respectively, as follows:
\begin{center}
	\begin{tikzpicture}[scale=.5]
	\path [fill=gray] (-1,0) rectangle (0.5,1);
	\draw (-1,0) --(0.5,1); 
	\node [left] at (-1,0) {$y'$};
	\node [right] at (0.5, 1) {$\widetilde y$};
	\draw (3,0.5) -- (4.5,0.5);
	\node [left] at (3,0.5) {$y'$};
	\node [right] at (4.5, 0.5) {$\widetilde y$};
	\path [fill=gray] (7,0) rectangle (8.5,1);
	\draw (7,1) --(8.5,0); 
	\node [left] at (7,1) {$y'$};
	\node [right] at (8.5,0) {$\widetilde y$};
	\end{tikzpicture}
\end{center}
	\end{enumerate}
(Regarding item (1) above, if $y_B = y_B'$, then $\frac{db}{dt}=0$, so this reduces to a 1-species network, and this case is done by Proposition~\ref{prop:main-prior}, part (1).)

We group the above possibilities as follows:

{\bf Case 1}: (a) 	$y_B > y_B' < \widetilde y_B$ or (b) $y_B < y_B' > \widetilde y_B$.  Visually, case (a) looks like one of the following, depending on which of the inequalities in~\eqref{eq:arrow-cases} holds:
\begin{center}
	\begin{tikzpicture}[scale=.6]
	\path [fill=gray] (6,0) rectangle (7.5,1);
	\draw [<->] (6,0) -- (5.5,.3);
	\draw [->] (7.5,1) -- (8, .7);
	\draw (6,0) --(7.5,1); 
	\path [fill=gray] (9,0) rectangle (10.5,1);
	\draw [->] (9,0) -- (8.5, .3);
	\draw [<->] (10.5,1) -- (11,0.7);
	\draw (9,0) --(10.5,1);
	\end{tikzpicture}
\end{center}
Similarly, case (b) looks like one of the following diagrams:
\begin{center}
	\begin{tikzpicture}[scale=.6]
	\path [fill=gray] (0,0) rectangle (1.5,1);
	\draw [<->] (0,1) -- (-0.5,0.7);
	\draw [->] (1.5,0) -- (2,.3);
	\draw (0,1) --(1.5,0); 
	\path [fill=gray] (3,0) rectangle (4.5,1);
	\draw [->] (3,1) -- (2.5,.7);
	\draw [<->] (4.5,0) -- (5,.3);
	\draw (3,1) --(4.5,0); 
	\end{tikzpicture}
\end{center}

Thus, for such a network $G$, some subnetwork $N$ has the shape given in Proposition~\ref{prop:main-prior}, and thus $N$ is nondegenerately multistationary -- unless the slope of the marked diagonal is $-1$. So, when the slope of the marked diagonal is {\em not } $-1$, then by Lemma~\ref{lem:lift}, the original network $G$ also is nondegenerately multistationary.

Finally, we consider the subcase (of case (b)) in which the slope of the marked diagonal is $-1$, i.e., $y_A' + y_B'= \widetilde y_A + \widetilde y_B$.    Thus, the second and third summand in the right-hand side of~\eqref{eq:da_case_-1} below have the same total degree, and this degree is higher than that of the first summand.  The differential equations are:
	\begin{align} \label{eq:da_case_-1}
	\frac{da}{dt} ~&=~ \kappa_1(y_A'-y_A)a^{y_A} b^{y_B} 
					- \kappa_2 (y_A' -y_A) a^{y_A'} b^{y'_B}
					+ \kappa_3 (\widetilde y_A' - \widetilde y_A) a^{\widetilde y_A} b^{\widetilde y_B} \\
	\frac{db}{dt} ~&=  \mu \frac{da}{dt}~, \notag
	\end{align}
where $\mu := (y_B'-y_B)/(y'_A-y_A) >0$.  Hence, we are interested in counting the number of positive multiplicity-one  roots of the right-hand side of~\eqref{eq:da_case_-1}, when the substitution $b:=\mu a +T$ is made, 
and we are free to choose any real value for $T$ and any positive values for the $\kappa_i$'s.   After performing the following operations to the right-hand side of~\eqref{eq:da_case_-1}: 
\begin{enumerate}
\item Substitute $b:=  \mu a +T$, and
\item Divide by $a^{y_A}$ (which is fine because we are interested in positive roots).
\end{enumerate}
we obtain:
\begin{align} \label{eq:g_case-1}
	g(a) ~&:=~ \kappa_1(y_A'-y_A) ( \mu a +T)^{y_B} 
					- \kappa_2 (y_A' -y_A) a^{y_A'-y_A} ( \mu a +T)^{y'_B}
					\\ \notag
				& \quad \quad	+ \kappa_3 (\widetilde y_A' - \widetilde y_A) a^{\widetilde y_A-y_A} ( \mu a +T)^{\widetilde y_B} ~.
\end{align}
We can choose $\kappa_2$ and $\kappa_3$ so that the leading coefficient of $g$ is positive (by ensuring that the inequality 
$\kappa_2 (y_A' -y_A) \mu^{y_B'} <
\kappa_3 (\widetilde y_A' - \widetilde y_A) \mu^{\widetilde y_B}$ holds), 
so $\lim_{a \to \infty} g(a)= \infty$.  Also, notice that $g(0)>0$ as long as $T>0$.  So, by the intermediate value theorem, 
it suffices to show that $g(1)<0$ when $T$ and $\kappa_1$ are chosen appropriately.  To see this, observe:
\begin{align*}
	g(1) ~&=~ \kappa_1(y_A'-y_A) ( \mu +T)^{y_B} 
					- \kappa_2 (y_A' -y_A) ( \mu +T)^{y'_B}
					+ \kappa_3 (\widetilde y_A' - \widetilde y_A)  ( \mu +T)^{\widetilde y_B} ~,
\end{align*}
and recall that $y_B < y'_B> \widetilde y_B$, so for $T$ sufficiently large, $g(1)<0$.

{\bf Case 2}: $y_B < y_B' \leq \widetilde y_B$.
There are, from~\eqref{eq:arrow-cases}, two subcases. We consider first the subcase of $y_A < y_A'  < \widetilde y_A$ (``$\lra \, \, \to$''), depicted here:
\begin{center}
	\begin{tikzpicture}[scale=.7]
]
	\draw (-2,-0.5) -- (12,-0.5);
	\draw (-1.5,-1) -- (-1.5,3.5);
	\draw [->] (1.1,0.13) -- (3.9,1.06);
	\draw [<-] (1.1,-0.07) -- (3.8,0.84);
	\draw [->] (6.1,2.05) -- (8.9,2.96);
	\filldraw [black] (1,0) circle (2pt);
	\filldraw [black] (4,1) circle (2pt);
	\filldraw [black] (6,2) circle (2pt);
	\filldraw [black] (9,3) circle (2pt);
    \node [left] at (1,0) {$(y_A,y_B)$};
    \node [right] at (4,1) {$(y'_A,y'_B)$};
    \node [left] at (6,2) {$(\widetilde y_A, \widetilde y_B)$};
    \node [right] at (9,3) {$(\widetilde y'_A, \widetilde y'_B)$};
    \node [right] at (2,0.9) {$\kappa_1$};
    \node [below] at (2,0.3) {$\kappa_2$};
    \node [right] at (7,2.8) {$\kappa_3$};
	\end{tikzpicture}
\end{center}

Hence,
	\begin{align} \label{eq:da}
	\frac{da}{dt} ~&=~ \kappa_1(y_A'-y_A)a^{y_A} b^{y_B} 
					- \kappa_2 (y_A' -y_A) a^{y_A'} b^{y'_B}
					+ \kappa_3 (\widetilde y_A' - \widetilde y_A) a^{\widetilde y_A} b^{\widetilde y_B} \\
	\frac{db}{dt} ~&=  \mu \frac{da}{dt}~,  \notag
	\end{align}
where $\mu := (y_B'-y_B)/(y'_A-y_A) >0$.  Hence, we are interested in counting the number of positive multiplicity-one  roots of the right-hand side of~\eqref{eq:da}, when the substitution $b:=T+\mu a$ is made, and 
we are free to choose any real value for $T$ and any positive values for the $\kappa_i$'s.  
Let $p_1 := y_A' - y_A$ and $p_2 := \widetilde y_A- y_A$ (so, the $p_i$'s are integers satisfying $1 \leq p_1 \leq p_2$), and let $n_1:= y'_B -  y_B$ and $n_2:= \widetilde y_B- y_B$ (so the $n_i$'s are integers with $0 \leq n_1 < n_2$).  After performing the following three operations on the right-hand side of~\eqref{eq:da}: 
\begin{enumerate}
\item Divide by $a^{y_A}b^{ y_B}$ (which is fine because we are interested in positive roots), \item Substitute $b:= \mu a$ (that is, we pick $T=0$),
and 
\item Divide by the positive term $\kappa_1(y_A'-y_A)$,
\end{enumerate}
we obtain:
\begin{align} \notag
	g(a) ~& :=~ 1 - \frac{\kappa_2}{\kappa_1} a^{p_1}(\mu a)^{n_1} + 
			 \frac{\kappa_3 (\widetilde y_A' - \widetilde y_A)}{\kappa_1(y_A'-y_A)} a^{p_2} (\mu a)^{n_2} \\
		~&~=~ 1 - l a^{p_1+n_1}  + m a^{p_2+n_2},	 \label{eq:g-case-2}
\end{align}
where $l:={\kappa_2}/{\kappa_1} \mu^{n_1}$ and $m:={\kappa_3 (\widetilde y_A' - \widetilde y_A)}/({\kappa_1(y_A'-y_A)} ) \mu^{n_2}$.  Note that $p_1+n_1 < p_2 +n_2$.
Also, we can choose any positive values for $l$ and $m$ by choosing the (positive) $\kappa_i$'s appropriately.
Thus, our question is whether there exist positive values of $m$ and $l$ for which the univariate polynomial $g(a)$, in~\eqref{eq:g-case-2}, admits two more positive multiplicity-one roots.  Indeed, this follows from the converse of Descartes' rule of signs~\cite[Theorem 1]{grabiner}, restated in \cite[Lemma 3.16]{Joshi:Shiu:Multistationary}.

Finally, the remaining subcase, when $\widetilde y_A < y_A < y_A'$ (the ``$\leftarrow \lra$'' case), is similar.  Specifically, after performing the steps analogous to those for the prior subcase, we obtain a polynomial whose negative has the form equal to the expression in~\eqref{eq:g-case-2}.  So, again, we can use the converse of Descartes' rule of signs to complete this subcase.

{\bf Remaining case}:	$y_B > y'_B \geq \widetilde y_B$.
There are, from~\eqref{eq:arrow-cases}, two subcases. We consider first the subcase of $y_A < y_A'  < \widetilde y_A$ (``$\lra \, \, \to$''), depicted here:
\begin{center}
	\begin{tikzpicture}[scale=.7]
	\draw (-1,-0.5) -- (12,-0.5);
	\draw (-0.5,-1) -- (-0.5,3.5);
	\draw [->] (1.1,2.96) -- (3.9,2.03);
	\draw [<-] (0.9,2.86) -- (3.8,1.9);
	\draw [->] (6.1,0.96) -- (8.9,0.03);
	\filldraw [black] (1,3) circle (2pt);
	\filldraw [black] (4,2) circle (2pt);
	\filldraw [black] (6,1) circle (2pt);
	\filldraw [black] (9,0) circle (2pt);
    \node [above] at (1,3) {$(y_A,y_B)$};
    \node [right] at (4,2.1) {$(y'_A,y'_B)$};
    \node [left] at (6,1) {$(\widetilde y_A, \widetilde y_B)$};
    \node [right] at (9,0) {$(\widetilde y'_A, \widetilde y'_B)$};
    \node [right] at (2,2.8) {$\kappa_1$};
    \node [below] at (2,2.5) {$\kappa_2$};
    \node [right] at (7,0.8) {$\kappa_3$};
	\end{tikzpicture}
\end{center}

Hence,
	\begin{align} \label{eq:da-again}
	\frac{da}{dt} ~&=~ \kappa_1(y_A'-y_A)a^{y_A} b^{y_B} 
					- \kappa_2 (y_A' -y_A) a^{y_A'} b^{y'_B}
					+ \kappa_3 (\widetilde y_A' - \widetilde y_A) a^{\widetilde y_A} b^{\widetilde y_B} \\
	\frac{db}{dt} ~&= - \mu \frac{da}{dt}~,  \notag
	\end{align}
where $\mu := (y_B-y_B')/(y'_A-y_A) >0$.  Hence, we are interested in counting the number of positive multiplicity-one  roots of the right-hand side of~\eqref{eq:da-again}, when the substitution $b:=T-\mu a$ is made, and we are free to choose any positive values of $T$ and the $\kappa_i$'s.
Let $p_1 := y_A' - y_A$ and $p_2 := \widetilde y_A- y_A$ (so, the $p_i$'s are integers satisfying $1 \leq p_1 \leq p_2$), and 
let $n_1:= y'_B - \widetilde y_B$ and $n_2:= y_B- \widetilde y_B$ (so the $n_i$'s are integers with $0 \leq n_1 < n_2$).  After performing the following three operations to the right-hand side of~\eqref{eq:da-again}: 
\begin{enumerate}
\item Divide by $a^{y_A}b^{\widetilde y_B}$ (which is fine because we are interested in positive roots), \item Substitute $b:= T- \mu a$,
and 
\item Divide by the positive term $\kappa_1(y_A'-y_A)$,
\end{enumerate}
we obtain:
\begin{align} \notag
	g(a) ~&:=~(T-\mu a)^{n_2}
			- \frac{\kappa_2 (y_A' -y_A) }{\kappa_1(y_A'-y_A)} a^{p_1} (T-\mu a)^{n_1}
			+ \frac{\kappa_3 (\widetilde y_A' - \widetilde y_A)}{\kappa_1(y_A'-y_A)} a^{p_2} \\
		~&~= (T-\mu a)^{n_2} - l a^{p_1} (T- \mu a)^{n_1} + m a^{p_2},	 \label{eq:g}
\end{align}
where $l:={\kappa_2}/{\kappa_1}$ and $m:={\kappa_3 (\widetilde y_A' - \widetilde y_A)}/({\kappa_1(y_A'-y_A)} )$.  Note that we can choose any positive values for $l$ and $m$ by choosing the (positive) $\kappa_i$'s appropriately.

Thus, our question is whether there exist positive values of $T, m, l$ for which the univariate polynomial $g(a)$, in~\eqref{eq:g}, admits two more positive multiplicity-one roots in the interval $(0, T/\mu)$. 
 We already know, because $G$ is multistationary, that $g$ admits two or more distinct positive roots in such an interval (for some choice of positive $T,m,l$).  Thus, by Proposition~\ref{Prop:PolyTwoDistincPositiveRoots}, we get the desired conclusion.

Finally, the remaining subcase, when $\widetilde y_A < y_A < y_A'$ (the ``$\leftarrow \lra$'' case), is similar.  Specifically, after performing the steps analogous to those for the prior subcase, we obtain a polynomial whose negative has the form equal to the expression in~\eqref{eq:g}.  So, again, we can use Proposition~\ref{Prop:PolyTwoDistincPositiveRoots} to complete the proof.
\end{proof}

\section{Toward results for three or more species} \label{sec:3+}
In this section, we describe efforts toward extending Theorem~\ref{thm:main} to allow for more than two species.  Specifically, 
our future goal is to prove the following conjecture: 

\begin{conjecture} \label{conj:2-species}
A network $G$ that consists of one reversible-reaction pair $y \lra y'$
and one irreversible reaction $\widetilde y \to \widetilde y'$ 
 is nondegenerately multistationary
 if and only if 
 the reaction vectors are (nontrivial) scalar multiples of each other: 
 $y'-y = \lambda (\widetilde y' - \widetilde y)$ for some $0 \neq \lambda \in \mathbb{R}$, and, for some species $i$, the embedded network of $G$ obtained by removing all species except $i$ is a 2-alternating network (``$\lra \, \, \to$''
or
 ``$ \leftarrow \, \, \lra$'').%
\end{conjecture}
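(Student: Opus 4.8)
The plan is to follow the architecture of the proof of Theorem~\ref{thm:main}, replacing the two-species bookkeeping by an argument valid for an arbitrary number of species. The implication $(1)\Rightarrow(2)$ is already available in full generality: if $G$ is nondegenerately multistationary then it is multistationary, and Theorem~\ref{prop:i=1-r=1} --- whose statement places no restriction on the number of species --- forces the two structural conditions. Since Theorem~\ref{prop:i=1-r=1} also gives that the two conditions imply multistationarity, the entire content of the conjecture lies in upgrading multistationarity to \emph{nondegenerate} multistationarity under condition $(2)$.

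First I would set up a univariate reduction. Condition $(2)$ says the stoichiometric subspace is $S=\mathrm{span}(v)$ with $v:=y'-y$, a line; fix the distinguished species $i$ (so $v_i\neq 0$) for which the embedded network is $2$-alternating. On a stoichiometric compatibility class we may write $x_j(t)=x^0_j+t v_j$, so that $f_\kappa=v\,h(t)$ for a scalar function $h$, and --- because $S$ is one-dimensional --- a positive steady state is nondegenerate precisely when the corresponding root of $h$ is simple, i.e.\ when $h'(t)\neq 0$ (the directional derivative of $h$ along $v$). Clearing denominators, $h$ becomes a signed combination of the three reactant monomials restricted to the line, $h(t)=\kappa_1 c_1 M_y(t)-\kappa_2 c_2 M_{y'}(t)+\kappa_3 c_3 M_{\widetilde y}(t)$, where $M_w(t)=\prod_j (x^0_j+t v_j)^{w_j}$ and the middle sign comes from the reversible pair. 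The goal becomes: given that $h$ has two positive roots in the feasible range (where all $x_j(t)>0$), choose positive rate constants and a compatibility class so that $h$ has two \emph{simple} positive roots there.

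The key difference from the two-species case is the shape of the monomials. When $v$ is supported on a single coordinate this is the one-species picture of Proposition~\ref{prop:main-prior}(1); when it is supported on two coordinates one recovers exactly the trinomial form \eqref{Equ:OurPolynomialDefinition} and may invoke Proposition~\ref{Prop:PolyTwoDistincPositiveRoots} or the converse of Descartes' rule of signs as in Theorem~\ref{thm:main}. For larger support, each $M_w$ is a product of several linear factors $(x^0_j+t v_j)^{w_j}$, so $h$ is no longer a trinomial. My plan here is twofold. In the non-degenerate exponent configurations --- those where, after substitution, the three monomials exhibit three distinct leading (or trailing) behaviors --- I would arrange the sign pattern dictated by the $2$-alternating condition, recover two sign changes, and conclude by the converse of Descartes' rule of signs, if necessary after sending some coordinates $x^0_j$ ($j\neq i$) to $0$ or $\infty$ so that three dominant terms survive and then perturbing back (having two simple roots is an open condition). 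In the degenerate configuration, where two monomials collide in degree (the analog of the slope $-1$ subcase), I would either tune one rate constant and the class by an intermediate-value argument as in the proof around \eqref{eq:da_case_-1}, or fall back on a discriminant/perturbation argument generalizing Lemma~\ref{Lem:ReduceMultiplicityOfRoots}: exhibit a one-parameter sub-family of rate constants fixing a chosen root $b$ and show it is not contained in $\nabla$ by computing the analog of \eqref{eq:difference} and reading off a sign contradiction from the ordered, alternating exponents.

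The hard part will be precisely this control of the extra linear factors contributed by the species $j\neq i$ with $v_j\neq 0$. They destroy the trinomial structure that made both Descartes' rule and the explicit computation \eqref{eq:difference} so clean, and it is \emph{a priori} conceivable that, for some support patterns, the whole accessible family of polynomials lies inside the discriminant $\nabla$. Ruling this out --- that is, proving the genuinely multivariate analog of Lemma~\ref{Lem:ReduceMultiplicityOfRoots}, with the contradiction still powered by the $2$-alternating ordering of exponents --- is the crux; I expect a limiting/dominance (tropical) reduction to the trinomial cases already handled to be the most promising way to tame it.
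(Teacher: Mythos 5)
You should know at the outset that the statement you attempted is not a theorem of the paper: it is Conjecture~\ref{conj:2-species}, which the authors explicitly state they \emph{cannot} prove (Section~\ref{sec:3+}). So the only question is whether your proposal closes that open gap, and it does not. Your handling of $(1)\Rightarrow(2)$ is fine, since Theorem~\ref{prop:i=1-r=1} has no restriction on the number of species, and your univariate reduction is correct and matches the paper's partial progress: with a one-dimensional stoichiometric subspace, positive steady states in a class correspond to roots of $h(t)=\kappa_1 c_1 M_y(t)-\kappa_2 c_2 M_{y'}(t)+\kappa_3 c_3 M_{\widetilde y}(t)$ in the feasible interval, and nondegeneracy corresponds to simplicity. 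But everything then hinges on the step you yourself flag as ``the crux'': controlling the extra linear factors $(x_j^0+t v_j)^{w_j}$ for species $j\neq i$, which destroy the trinomial structure of \eqref{Equ:OurPolynomialDefinition}. Your main proposal for this crux --- send some coordinates $x_j^0$ to $0$ or $\infty$ so that the polynomial degenerates to a trinomial handled by Proposition~\ref{Prop:PolyTwoDistincPositiveRoots}, then perturb back --- is exactly the strategy the paper carries out in Section~\ref{sec:3+} (setting $T_j=0$ for species with $\frac{dx_j}{dt}=\frac{dx_1}{dt}$ and equating the remaining $T_j$'s), and the paper exhibits a concrete network, Example~\ref{ex:fails}, where it provably fails: after the reduction, two of the three monomials collide and one is left with $k_1(T-a)^4+(-k_2+k_3)a^2(T-a)^2$, which is not of any form known to be nondegenerately multistationary, so there is nothing to perturb back from.

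For precisely these colliding configurations your fallbacks are only named, not executed: an ``intermediate-value argument'' as around \eqref{eq:da_case_-1}, or a ``multivariate analog of Lemma~\ref{Lem:ReduceMultiplicityOfRoots}.'' Neither is routine. The sign contradiction in \eqref{eq:difference} relies on the very specific two-parameter perturbation \eqref{Equ:LinearInvariantSubspace}, which exists because $l$ and $m$ each multiply a \emph{single} monomial of \eqref{Equ:OurPolynomialDefinition}; once the three reactant monomials are products of several distinct linear factors, the analogous one-parameter family fixing a root $b$ changes the polynomial in a more complicated way, and no computation is given showing the resulting condition is contradictory. The paper itself only rescues its failing example by an ad-hoc numerical choice (Remark~\ref{rmk:alternate}), not by any general mechanism. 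So your proposal is a sensible research plan that independently rediscovers the paper's own partial approach and its obstruction, but the statement remains unproven by it.
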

We unfortunately
cannot prove Conjecture~\ref{conj:2-species}, but in some cases 
(see Example~\ref{ex:works})
but not all
(Example~\ref{ex:fails})
we can reduce networks with 3 or more species to the case of 2 species.

\begin{example} \label{ex:works}
Consider the following 4-species network:
	\begin{align} \label{eq:network-works}
			2C+2D
			 \mathrel{\mathop{\rightleftarrows}^{k_1}_{k_2}}
			 A+B+C+D
			  \quad \quad 
			  2A+2B+C+D 
		\overset{k_3}\to 3A+3B~.
	\end{align}
The conservation-law equations are
	\begin{align} \label{eq:cons-law}
	b=a+T_1 \quad \quad 
	c= T_2 - a \quad \quad
	d=T_3-a~,
	\end{align}
for some $T_1 \in \mathbb{R}$ and $T_2,T_3>0$.  
After substituting the equations~\eqref{eq:cons-law} into the steady-state equation, we obtain:
	\begin{align} \label{eq:after-sub}
	0 ~=~ k_1 (T_2-a)^2 (T_3-a)^2
		- k_2 a (a+T_1)(T_2-a)(T_3-a)
		+ k_3 a^2 (a+T_1)^2 (T_2-a) (T_3-a)~.
	\end{align}	
If we choose $T_1=0$ and $T_2=T_3=:T$, equation~\eqref{eq:after-sub}
reduces to:
	\begin{align} \label{eq:now-reduced}
	0 ~=~ k_1 (T-a)^4
		- k_2 a^2  (T-a)^2
		+ k_3 a^4  (T-a)^2~,
	\end{align}	
which in turn has the general form of the 
steady-state equation
(after conservation-law substitution) for the following network:
	\begin{align} \label{eq:network-works-reduced}
			4E
			 \mathrel{\mathop{\rightleftarrows}^{k_1}_{k_2}}
			 2A+2E
			  \quad \quad 
			  4A+2E 
		\overset{k_3}\to 6A~.
	\end{align}
Network~\eqref{eq:network-works-reduced} is known from Theorem~\ref{thm:main}
to be nondegenerately multistationary, so there exists $T>0$ such that equation~\eqref{eq:now-reduced}
 has multiple nondegenerate roots.  Therefore, the original network~\eqref{eq:network-works} is also nondegenerately multistationary.
\end{example}

In Example~\ref{ex:works}, we showed that the 4-species network~\eqref{eq:network-works} is
nondegenerately multistationary 
by reducing to the 2-species case.  Let us summarize this approach, which applies to
certain networks (with $3$ or more species) in which every species $i=2, 3, \dots, s$
satisfies $\frac{dx_i}{dt}=  \pm \frac{dx_1}{dt}$
(here, without loss of generality, 
the species having the form
``$\lra \, \, \to$''
or
 ``$ \leftarrow \, \, \lra$'' is species 1).
For every species $i=2,...,s$ for which $\frac{dx_i}{dt}= \frac{dx_1}{dt}$, 
we set
 $T_i=0$, and then we set all remaining $T_i$'s equal to each other.  
If the resulting steady-state equation
has the form arising from a (2-species) network 
that is known to be nondegenerately multistationary, then we are done: the original network also is.  

This technique, however, does not always work, as the following example shows.

\begin{example} \label{ex:fails}
Consider the following 4-species network:
	\begin{align} \label{eq:network-fails}
			2C+2D
			 \mathrel{\mathop{\rightleftarrows}^{k_1}_{k_2}}
			 A+B+C+D
			  \quad \quad 
			  2A+C+D 
		\overset{k_3}\to 3A+B~.
	\end{align}
The conservation-law equations are 
 given in \eqref{eq:cons-law}, the same as those for 
 Example~\ref{ex:works}.
 After substituting the equations~\eqref{eq:cons-law} into the steady-state equation, we obtain:
	\begin{align} \label{eq:after-sub-fails}
	0 ~=~ k_1 (T_2-a)^2 (T_3-a)^2
		- k_2 a (a+T_1)(T_2-a)(T_3-a)
		+ k_3 a^2  (T_2-a) (T_3-a)~.
	\end{align}	
This time, however, when we choose $T_1=0$ and $T_2=T_3=:T$, equation~\eqref{eq:after-sub-fails} becomes:
	\begin{align*} 
	0 ~&=~ k_1 (T-a)^4
		- k_2 a^2  (T-a)^2
		+ k_3 a^2  (T-a)^2 \\
	~&=~	 k_1 (T-a)^4
		+ (-k_2+k_3) a^2  (T-a)^2~,
	\end{align*}	
which does {\em not} arise from the steady-state equation of a 2-species network 
that is known to be nondegenerate.  Hence, if we want to show that network~\eqref{eq:network-fails} is nondegenerately multistationary, we will need another approach.  (For such an approach, see Remark~\ref{rmk:alternate}.)
\end{example}

\begin{remark} \label{rmk:alternate}
An ad-hoc method for proving that network~\eqref{eq:network-fails} is nondegenerately multistationary is as follows.  First, rearrange \eqref{eq:after-sub-fails} as follows:
	\begin{align} \label{eq:after-sub-fails-simplified}
	0 ~=~ (T_2-a) (T_3-a) \cdot \left[
		  (k_1 + k_3 - k_2) a^2 			
		- (k_1(T_2 + T_3) + k_2 T_1) a
		+ k_1 T_2 T_3
		\right]~.
	\end{align}
Now choose $(k_1,k_2,k_3):= (2/9,1,16/9)$
and
$(T_1,T_2,T_3):=(8/3,3,3)$
so that~\eqref{eq:after-sub-fails-simplified}
becomes:
	\begin{align*}
	0 ~=~ (T_2-a) (T_3-a) \left[
		 a^2 			
		- 3 a
		+ 2		
		\right]~=~
		(3-a) (3-a)(a-2)(a-1)~.
	\end{align*}
This equation has two simple roots, $a^*=1$ and $a^*=2$, in the interval
$\left(0, {\rm min}(T_2,T_3) \right)=(0,3)$.  These roots correspond to nondegenerate steady states, so network~\eqref{eq:network-fails} is nondegenerately multistationary.
\end{remark}

\section{Discussion} \label{sec:disc}
Our work was motivated by the Nondegeneracy Conjecture: Is a network multistationary if and only if it is {\em nondegenerately} multistationary?  
At first, one might think this is easily so; we would expect to be able to perturb parameters to make a degenerate steady state become nondegenerate.  
Indeed, we succeed in doing precisely this for small networks (Theorems~\ref{thm:main} and~\ref{thm:main2}).
Nevertheless, such arguments are subtle. The perturbations must be done carefully, as we saw in the proof of Lemma~\ref{Lem:ReduceMultiplicityOfRoots}.

Looking forward, 
we expect that the algebraic techniques we used here 
will help us classify 
 more (perhaps all) one-dimensional reaction systems
  (recall Conjecture~\ref{conj:2-species} and see also \cite[Question 6.1]{Joshi:Shiu:Multistationary}).  
  Indeed, to resolve such problems, we 
  will need tools for analyzing families of univariate polynomials.

Finally, as mentioned earlier, our true interest in applications goes beyond multistationarity -- to {\em multistability}.  We do not yet have a complete classification of one-dimensional multistable networks, not even among networks consisting of (a) 2 irreversible reactions, (b) 1 irreversible and 1 reversible-reaction pair, or (c) 2 reversible-reaction pairs (\cite[Question 6.2]{Joshi:Shiu:Multistationary}).  What our work contributes here are corresponding results at the level of multistationarity -- which then point the way forward for achieving multistability.

{\small
\subsection*{Acknowledgements}
We thank two referees for their detailed comments, which improved this work. 
TdW was partially supported by the DFG (WO 2206/1-1).  
AS was partially supported by the NSF (DMS-1312473/DMS-1513364)
and the Simons Foundation (\#521874).  
This article was finalized while TdW was hosted by the Institut Mittag-Leffler. We thank the institute for its hospitality.
}

\bibliographystyle{amsalpha}
\bibliography{Shiu_deWolff_SmallReactionNetworks}

\end{document}